\date{}
\newcommand{\Rmnum}[1]{\expandafter\@slowromancap\romannumeral #1@}
\numberwithin{equation}{section}
\newtheorem{corollary}{Corollary}[section]
\newtheorem{lemma}{Lemma}[section]
\newtheorem{proposition}{Proposition}[section]
\newtheorem{theorem}{Theorem}[section]
\newcommand\theref[1]{Theorem~\ref{#1}}
\newcommand\lemref[1]{Lemma~\ref{#1}}
\newcommand\proref[1]{Proposition~\ref{#1}}
\newcommand\secref[1]{Section~\ref{#1}}
\title{Subsonic time-periodic solution to compressible Euler equations with damping in a bounded domain}
\author[a]{Peng Qu\thanks{{e}-mail: pqu@fudan.edu.cn}}
\author[b]{Huimin Yu\thanks{{e}-mail: hmyu@sdnu.edu.cn}}
\author[b]{Xiaomin Zhang\thanks{{e}-mail: zxm15924687@163.com}}
\affil[a]{School of Mathematical Sciences, Shanghai Key Laboratory for Contemporary Applied Mathematics, Fudan  University, Shanghai 200433 China}
\affil[b]{Department of mathematics, Shandong Normal University, Jinan 250014 China}
\begin{document}
\begin{sloppypar}
\date{}
\maketitle
\begin{center}
\begin{minipage}{130mm}{\small
\textbf{Abstract}:
In this paper, we consider the one-dimensional isentropic compressible Euler equations with linear damping $\beta(t,x)\rho u$ in a bounded domain, which can be used to describe the process of compressible flows through a porous medium.~And the model is imposed a dissipative subsonic time-periodic boundary condition.~Our main results reveal that the time-periodic boundary can trigger a unique subsonic time-periodic smooth solution which is stable under small perturbations on initial data. Moreover, the time-periodic solution possesses higher regularity and stability provided a higher regular boundary condition. \\
\textbf{Keywords}: Isentropic compressible Euler equations, time-periodic boundary, source term, global existence, stability, subsonic flow, time-periodic solutions
\\
\textbf{Mathematics Subject Classification 2010}:  35B10, 35A01, 35Q31.}
\end{minipage}
\end{center}
\section{Introduction}
\indent\indent In this paper, we consider the one-dimensional compressible Euler equations for isentropic flow with linear damping in Eulerian coordinates
\begin{equation}\label{a1}
\left\{\begin{aligned}
&\partial_{t}\rho+\partial_{x}(\rho u)=0,\\
&\partial_{t}(\rho u)+\partial_{x}(\rho u^{2}+p)=\beta(t,x)\rho u,
\end{aligned}\right.
\end{equation}
where temporal and spatial variables $(t,x)\in D=\{t,x)|t\in\mathbb{R}_{+},x\in[0,L]\}$. $\rho, u$ and $p(\rho)=A\rho^{\gamma}$ are the density, velocity and pressure respectively. Here the adiabatic gas exponent $\gamma>1$ and pressure coefficient $A$ is normalized to $1$. Damping coefficient $\beta=\beta(t,x)$ is a non-positive $C^{2}$ smooth function.\\
\indent In recent years, much effort has been made on the time-periodic solutions to the viscous fluids equations and the hyperbolic conservation laws, see for example~\cite{Cai,Luo,Ma,Matsumura,G,Ohnawa,Takeno,Temple,Naoki}. However, the time-periodic solutions mentioned above are usually caused by the time-periodic external forces. As far as we know, there is little work to consider the problem with time-periodical boundary. In 2019, Yuan~\cite{Yuan} studied the existence and high-frequency limiting behavior of supersonic time-periodic solutions to the 1-D isentropic compressible Euler equations (i.e. $\beta(t,x)\equiv0$) with time-periodic inflow boundary conditions. Supersonic is an essential assumption in~\cite{Yuan}, which implies that all characteristic propagate forward in both space and time. Therefore, due to the finite propagation speed, Yuan obtained the existence of time-periodic solution after the start-up time $T_{0}>0$, which are needed for the boundary condition to effect the whole domain. From a physical point of view, the force produced by the wall of duct~\cite{Shapiro} or porous medium can be regarded as some source term added to the Euler equations. A natural question is: what kind of effects can the friction term do on the time-periodic solution$?$ \cite{Yu, Yuh} considered the existence and stability of supersonic time periodic flows for the compressible Euler equations with friction term.
Due to wave interactions between families of different directions, it is well known that the case with subsonic boundary conditions is more complicated for compressible Euler equation. Motivated by~\cite{Qu}, in which Qu considered the time-periodic solutions triggered by a kind of dissipative time-periodic boundary condition for the general quasilinear hyperbolic systems, we consider the subsonic time-periodical solutions of isentropic Euler equation with linear damping and the damping coefficient $\beta(t,x)$ satisfy the following hypothesis: \\
\emph{(H):~there exist two constants $\beta_{*}<0$ and $T_{*}>0$ such that
\begin{align}
&\beta_{*}\leq\beta(t,x)\leq0,\label{a2}\\
&|\partial_{t}\beta(t,x)|,|\partial_{x}\beta(t,x)|\leq\epsilon,\label{a3}\\
&\beta(t+T_{*},x)=\beta(t,x),\label{a4}
\end{align}
where the small constant $\epsilon$ will be determined later.}

Obviously, \eqref{a2}-\eqref{a4} hold if $\beta(t,x)$ is a negative constant.\\
\indent It should be noted that: the third author Zhang~\cite{Zhang} studied the same problem with~$\beta(t,x)=\beta(t)$. However, \cite{Zhang} needs the smallness of the friction coefficient $\beta(t)$ and its integral needs to be zero in one cycle, i.e. $$\displaystyle\int_t^{t+T_{*}} \beta(s)ds=0,$$
 which can not be satisfied for general constants. We remove these two restrictions and include the constant friction coefficient case in this paper. The main idea used in this paper is that we select a special linearized iteration scheme~\eqref{c3} and~\eqref{c5}. The advantage of this modified iteration scheme is to make better use of the nonstrictly diagonally dominant structure brought by the linear damping. While the iteration scheme used in~\cite{Zhang} can only utilize the smallness of the source term. \cite{ZhangX} also studied this problem with nonlinear damping. \\
\indent The rest of this paper is organized as follows. In~\secref{s2}, we first introduce the Riemann invariants of the homogeneous compressible Euler equations and give the main results:~\theref{t1}--\theref{t5}. In~\secref{s3}, we use the linearized iteration method to prove ~\theref{t1}. In~\secref{s4}, we use the inductive method to prove~\theref{t2}. In~\secref{s5} and~\secref{s6}, we consider the higher regularity and stability of the time-periodic solution.

\section{Prelimilaries and Main Results}\label{s2}
\indent\indent In this section, we make some transformations on equations~\eqref{a1} and give the main results of this paper.\\
\indent Firstly, it is easy to check that the two eigenvalues of system ~\eqref{a1} are
$$
\lambda_{1}=u-c,\quad\lambda_{2}=u+c.
$$
Then there holds
\begin{align*}
\lambda_{1}(\underline{\rho},0)<0<\lambda_{2}(\underline{\rho},0)
\end{align*}
and
\begin{align}
\lambda_{1}(\rho,u)<0<\lambda_{2}(\rho,u),\quad (\rho,u)\in\Im\label{b1}
\end{align}
for any positive constant $\underline{\rho}>0$ and a small neighborhood $\Im$ of $(\underline{\rho},0)$.\\
\indent Using the Riemann invariants $m$ and $n$ defined by
\begin{equation}\label{b2}
m=\frac{1}{2}(u-\frac{2}{\gamma-1}c),\quad
n=\frac{1}{2}(u+\frac{2}{\gamma-1}c),
\end{equation}
the system~\eqref{a1} becomes the following diagonal form
\begin{align}\label{b3}
\left\{
\begin{aligned}
m_{t}+\lambda_{1}(m,n)m_{x}=\frac{\beta}{2}(m+n),\\
n_{t}+\lambda_{2}(m,n)n_{x}=\frac{\beta}{2}(m+n),
\end{aligned}\right.
\end{align}
where $$c=\sqrt{\frac{\partial p}{\partial\rho}}=\sqrt{\gamma}\rho^{\frac{\gamma-1}{2}},$$
$$\lambda_{1}(m,n)=\frac{\gamma+1}{2}m+\frac{3-\gamma}{2}n,\quad\lambda_{2}(m,n)=\frac{3-\gamma}{2}m+\frac{\gamma+1}{2}n.$$
We impose the system~\eqref{a1} or~\eqref{b3} with the following initial data and boundary conditions
\begin{align}
t=0:\quad m(0,x)&=m_{0}(x),~~n(0,x)=n_{0}(x),\label{b4}\\
x=0:\quad n(t,0)&=n_{b}(t)+\kappa_{2}(m(t,0)-\underline{m}),\label{b5}\\
x=L:\quad m(t,L)&=m_{b}(t)+\kappa_{1}(n(t,L)-\underline{n}),\label{b6}
\end{align}
where the constants $|\kappa_{1}|<1$, $|\kappa_{2}|<1$ and $-\underline{m}=\underline{n}=\frac{\sqrt{\gamma}}{\gamma-1}\underline{\rho}^{\frac{\gamma-1}{2}}$. Moreover, in~\eqref{b5}-\eqref{b6}, we assume $m_{b}(t),n_{b}(t)$ are two periodic functions with the period $T_{*}>0$, namely,
$$ m_{b}(t+T_{*})=m_{b}(t),\quad n_{b}(t+T_{*})=n_{b}(t).$$
\eqref{b5}-\eqref{b6} is a kind of dissipative boundary condition in the sense of~\cite{Li}.\\
\indent Denote the perturbation variable
$$ \phi(t,x)=(\phi_{1}(t,x),\phi_{2}(t,x))^{\top}\mathop{=}\limits^{def.}(m(t,x)-\underline{m},n(t,x)-\underline{n})^{\top}$$
and let $\Phi=(\underline{m},\underline{n})^{\top}$, then equations~\eqref{b3} can be rewritten as
\begin{align}\label{b7}
\left\{
\begin{aligned}
\partial_{t}\phi_{1}+\lambda_{1}(\phi+\Phi)\partial_{x}\phi_{1}=\frac{\beta}{2}(\phi_{1}+\phi_{2}),\\
\partial_{t}\phi_{2}+\lambda_{2}(\phi+\Phi)\partial_{x}\phi_{2}=\frac{\beta}{2}(\phi_{1}+\phi_{2}),
\end{aligned}\right.
\end{align}
where $\lambda_{1}(\Phi)=-\underline{c}=-\sqrt{\gamma}\underline{\rho}^{\frac{\gamma-1}{2}}, \lambda_{2}(\Phi)=\underline{c}=\sqrt{\gamma}\underline{\rho}^{\frac{\gamma-1}{2}}.$\\
\indent Correspondingly, the initial data and boundary conditions~\eqref{b4}-\eqref{b6} become
\begin{align}
t=0:~~\phi(0,x)&=\phi_{0}(x)=(\phi_{1_{0}}(x),\phi_{2_{0}}(x))^{\top}\notag\\
&\hspace{1.3cm}=(m_{0}(x)-\underline{m},n_{0}(x)-\underline{n})^{\top},\label{b8}\\
x=0:~~\phi_{2}(t,0)&=\phi_{2_{b}}(t)+\kappa_{2}\phi_{1}(t,0),\quad t\geq0,\label{b9}\\
x=L:~~\phi_{1}(t,L)&=\phi_{1_{b}}(t)+\kappa_{1}\phi_{2}(t,L),\quad t\geq0,\label{b10}
\end{align}
where $\phi_{1_{b}}(t)=m_{b}(t)-\underline{m},\phi_{2_{b}}(t)=n_{b}(t)-\underline{n}$ satisfying
$$\phi_{i_{b}}(t+T_{*})=\phi_{i_{b}}(t),\quad t>0, i=1,2.$$
Obviously, we have the following two facts:\\
$\mathbf{1}:~$By~\eqref{b1}, we get
\begin{align*}
\lambda_{1}(\phi+\Phi)<0<\lambda_{2}(\phi+\Phi),\quad \forall \phi\in \Psi,
\end{align*}
where $\Psi$ is a small neighborhood of $O=(0,0)^{\top}$ corresponding to $\Im$.\\
$\mathbf{2}:~$Define $ \nu_{i}(\phi+\Phi)=\lambda_{i}^{-1}(\phi+\Phi)(i=1,2)$, then there exists a positive constant $A_{0}$, such that
\begin{align}\label{b13}
\mathop{\max}\limits_{i=1,2}\mathop{\sup}\limits_{\phi\in \Psi}|\nu_{i}(\phi+\Phi)|\leq A_{0}.
\end{align}
Next, we give the main results in the following theorems.
\begin{theorem}\label{t1}
(Existence of time-periodic solutions) There exists a small enough constant $\epsilon_{1}>0$ and a constant $C_{E}>0$ such that for any given $0<\epsilon<\epsilon_{1}$, and any given $C^{1}$ smooth functions $\phi_{i_{b}}(t)(i=1,2)$ satisfying
\begin{align}
& \phi_{i_{b}}(t+T_{*})=\phi_{i_{b}}(t),\quad t>0, i=1,2,\label{b11}\\
& \|\phi_{i_{b}}(t)\|_{C^{1}(\mathbb{R}_{+})}\leq\epsilon,\label{b12}
\end{align}
there exists a $C^{1}$ smooth function $\phi_{0}(x)$ satisfying
\begin{align}
\|\phi_{0}\|_{C^{1}([0,L])}\leq C_{E}\epsilon,\label{b14}
\end{align}
such that the initial-boundary value problem~\eqref{b7}-\eqref{b10} admits a $C^{1}$ time-periodic solution $\phi=\phi^{(T_{*})}(t,x)$ on $D=\{(t,x)|t\in \mathbb{R}_{+},x\in [0,L]\}$ which satisfies
\begin{align}
\phi^{(T_{*})}(t+T_{*},x)&=\phi^{(T_{*})}(t,x),\quad \forall(t,x)\in D\label{b15}
\end{align}
and
\begin{align}
\|\phi^{(T_{*})}\|_{C^{1}(D)}&\leq C_{E}\epsilon.\label{b16}
\end{align}
\end{theorem}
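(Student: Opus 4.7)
The plan is to build the time-periodic solution as the limit of a specially designed linearized iteration, where both the interior damping ($\beta\le 0$) and the dissipativity of the boundary conditions ($|\kappa_1|,|\kappa_2|<1$) are used in tandem to close uniform $C^1$ estimates. The decisive choice is to freeze the characteristic speeds and only the \emph{off-diagonal} part of the source at the previous iterate, while keeping the diagonal damping implicit:
\begin{align*}
\partial_t \phi_1^{(k+1)}+\lambda_1(\phi^{(k)}+\Phi)\,\partial_x\phi_1^{(k+1)} &= \tfrac{\beta}{2}\phi_1^{(k+1)} + \tfrac{\beta}{2}\phi_2^{(k)},\\
\partial_t \phi_2^{(k+1)}+\lambda_2(\phi^{(k)}+\Phi)\,\partial_x\phi_2^{(k+1)} &= \tfrac{\beta}{2}\phi_2^{(k+1)} + \tfrac{\beta}{2}\phi_1^{(k)},
\end{align*}
supplemented with the linearized forms of \eqref{b9}--\eqref{b10}. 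Each equation is then a scalar linear transport equation with non-positive damping, so that integration along its characteristic produces an integrating factor $\exp\bigl(\int \tfrac{\beta}{2}\,d\tau\bigr)\le 1$. This is precisely where the nonstrictly diagonally dominant structure of the matrix $\tfrac{\beta}{2}\bigl(\begin{smallmatrix}1&1\\1&1\end{smallmatrix}\bigr)$ is exploited: the diagonal part is absorbed into genuine decay rather than treated as a small perturbation, which would waste the sign information on $\beta$.

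For each fixed $k$, I would construct a $T_*$-periodic $C^1$ solution $\phi^{(k+1)}$ of the above linear IBVP by a Poincar\'e-map argument. Given any $C^1$ profile $\psi$ on $[0,L]$, evolving the linear system from $t=0$ with initial data $\psi$ and tracing characteristics together with their reflections at $x=0$ and $x=L$ gives a well-defined solution on $[0,T_*]\times[0,L]$. The map $\mathcal{P}^{(k)}\colon\psi\mapsto\phi(T_*,\cdot)$ is a contraction on a small $C^1$-ball: every round-trip of a characteristic between the two boundaries accumulates a factor bounded by $|\kappa_1|\,|\kappa_2|\exp\!\bigl(\int_{\rm trip}\tfrac{\beta}{2}\bigr)$, which is strictly less than one by hypothesis (H) and $|\kappa_i|<1$. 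Its unique fixed point provides the desired periodic solution $\phi^{(k+1)}$, and the same characteristic book-keeping yields the uniform bound $\|\phi^{(k+1)}\|_{C^1}\le C_E\epsilon$ once $\epsilon_1$ is chosen small enough. Here the assumption $\|\phi_{i_b}\|_{C^1}\le\epsilon$ from \eqref{b12} is critical for controlling the boundary source after differentiation.

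The convergence step estimates the differences $\Delta^{(k)}=\phi^{(k+1)}-\phi^{(k)}$, which satisfy a linear system of the same dissipative type driven by $\tfrac{\beta}{2}\Delta^{(k-1)}$ together with Lipschitz-in-$\phi$ remainders of the $\lambda_i$. By reusing the same integrating-factor and reflection estimate, one gets $\|\Delta^{(k)}\|_{C^1}\le\theta\|\Delta^{(k-1)}\|_{C^1}$ with $\theta<1$ for $\epsilon<\epsilon_1$ small, so $\phi^{(k)}$ converges in $C^1(D)$ to a $T_*$-periodic function $\phi^{(T_*)}$ satisfying \eqref{b15}--\eqref{b16}; the corresponding initial trace $\phi_0$ is automatically $C^1$ and of size $O(\epsilon)$. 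Passing to the limit in the iteration recovers the original nonlinear system \eqref{b7} with boundary conditions \eqref{b9}--\eqref{b10}.

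The main obstacle, I expect, will be the $C^1$ (rather than merely $C^0$) closure: differentiating the iteration introduces new coupling terms involving $\partial_t\beta,\partial_x\beta$ and the derivatives $\partial_{t,x}\phi^{(k)}$ along the characteristics, plus boundary source terms requiring $C^1$-compatibility of $\phi_{i_b}$. One must verify that the same characteristic-reflection inequality still closes at the derivative level, which is exactly why hypothesis (H) bounds $|\partial_t\beta|,|\partial_x\beta|$ by the same small $\epsilon$ that controls the boundary data. Once this derivative-level estimate is in hand, the periodicity of the constructed limit follows for free from the Poincar\'e-map construction at each iteration, and uniqueness within the ball $\|\phi\|_{C^1}\le C_E\epsilon$ is inherited from the contraction property.
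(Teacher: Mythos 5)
Your core structural insight matches the paper's: keep the diagonal part of the damping implicit at the current iterate so that the sign $\beta\le 0$ is converted into a genuine exponential gain, rather than treating the whole source as a small perturbation (the paper calls this ``making better use of the nonstrictly diagonally dominant structure''). However, the mechanics are genuinely different. The paper first multiplies by $\nu_i=\lambda_i^{-1}$ and swaps the roles of $t$ and $x$, turning each scalar equation of the iteration into a \emph{Cauchy problem in the spatial variable} with $T_*$-periodic ``initial'' data prescribed on all of $t\in\mathbb{R}$ at $x=L$ (for $\phi_1$) and $x=0$ (for $\phi_2$); moreover, the boundary data \eqref{c4}, \eqref{c6} are evaluated at the \emph{previous} iterate, so the two scalar equations decouple completely at each step. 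Periodicity in $t$ is then automatic from the periodicity of the data and of $\beta$, and the damping is encoded in the integrating factors $F_1,F_2\ge 1$ of \eqref{c28}--\eqref{c29}, with the off-diagonal source absorbed because $\frac{\beta}{2}F_i\nu_i(\Phi)=\mp\partial_x F_i$. Your version instead keeps $t$ as the evolution direction, couples $\phi_1^{(k+1)}$ and $\phi_2^{(k+1)}$ through reflecting boundary conditions, and extracts a $T_*$-periodic solution of the linear IBVP via a Poincar\'e map on the initial profile. This can be made to work, but it introduces two technicalities the paper's route simply avoids: (i) for subsonic flow the period $T_*$ may be much smaller than $T_0$, in which case a backward characteristic need not complete a single round trip in one period, so $\mathcal{P}^{(k)}$ itself need not contract and you must argue via $(\mathcal{P}^{(k)})^n$ for $n$ large (and then recover a fixed point of $\mathcal{P}^{(k)}$ from the fixed point of its power); and (ii) the Poincar\'e map must be restricted to profiles satisfying the $C^1$ compatibility conditions at the corners $(0,0)$, $(0,L)$, whereas the $t\!-\!x$ swap has no corner at all. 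Finally, both routes face the same hard point you flag: passing from uniform $C^1$ bounds plus a $C^0$ Cauchy estimate to $C^1$ convergence of the iterates. The paper addresses this with the modulus-of-continuity estimates \eqref{c16}--\eqref{c17} (giving equicontinuity of the derivative sequence, hence $C^1$ convergence by Arzel\`a--Ascoli and the $C^0$ limit identification), and your proposal leaves this step as an announced obstacle rather than resolving it; to complete the argument you would need an analogue of those equicontinuity bounds compatible with the reflected characteristics.
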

\begin{theorem}\label{t2}
(Stability of time-periodic solutions) There exists a small constant $\epsilon_{2}\in(0,\epsilon_{1})$ and a constant $C_{S}>0$, such that for any given $\epsilon\in(0,\epsilon_{2})$ and any given $C^{1}$ smooth functions $\phi_{0}=\phi_{0}(x)$ and $\phi_{i_{b}}(t)(i=1,2)$ satisfying \eqref{b14} and \eqref{b11}-\eqref{b12} with certain compatibilities, the initial-boundary value problem \eqref{b7}-\eqref{b10} has a unique global $C^{1}$ classical solution $\phi=\phi(t,x)$ on $D=\{(t,x)|t\in\mathbb{R}_{+},x\in[0,L]\}$ satisfying
\begin{align}
\|\phi(t,\cdot)-\phi^{(T_{*})}(t,\cdot)\|_{C^{0}}\leq C_{S}\epsilon\xi^{[t/T_{0}]},\quad\forall t\geq0,\label{b17}
\end{align}
where $\phi^{(T_{*})}$, depending on $\phi_{i_{b}}(t)(i=1,2)$, is the time-periodic solution given through \theref{t1}, $\xi\in(0,1)$ is a constant and  $T_{0}=\mathop{\max}\limits_{i=1,2}\mathop{\sup}\limits_{\phi\in \Phi}\frac{L}{|\lambda_{i}(\phi+\underline{\phi})|}$.
\end{theorem}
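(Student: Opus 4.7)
The plan is to work with the perturbation $\psi:=\phi-\phi^{(T_*)}$, which, after substitution into \eqref{b7} and subtraction of the equations satisfied by $\phi^{(T_*)}$, solves the diagonal quasilinear system
\begin{equation*}
\partial_t\psi_i+\lambda_i(\psi+\phi^{(T_*)}+\Phi)\,\partial_x\psi_i=\frac{\beta}{2}(\psi_1+\psi_2)-\bigl[\lambda_i(\psi+\phi^{(T_*)}+\Phi)-\lambda_i(\phi^{(T_*)}+\Phi)\bigr]\partial_x\phi_i^{(T_*)},
\end{equation*}
with the \emph{homogeneous} dissipative boundary conditions $\psi_2(t,0)=\kappa_2\psi_1(t,0)$, $\psi_1(t,L)=\kappa_1\psi_2(t,L)$, and $C^1$-small initial data $\psi(0,x)=\phi_0(x)-\phi^{(T_*)}(0,x)$. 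Since $\|\phi^{(T_*)}\|_{C^{1}}=O(\epsilon)$ by \eqref{b16}, the bracketed factor is uniformly of size $O(\epsilon)\cdot|\psi|$, so $\psi$ satisfies a system of the same type as \eqref{b7} but with homogeneous dissipative boundary data and lower-order coefficient corrections of order $\epsilon$: the classical setting for exponential decay.

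I would first establish local-in-time $C^{1}$ existence on each slab $D_k:=[kT_0,(k+1)T_0]\times[0,L]$ by the same linearized iteration used for \theref{t1}, adapted to these homogeneous boundary data. As long as $\|\psi(kT_0,\cdot)\|_{C^{1}}$ is sufficiently small the iteration converges and produces a unique $C^{1}$ solution on $D_k$ whose $C^{1}$ norm is controlled by that of its restriction to $\{t=kT_0\}$. Global existence together with the decay \eqref{b17} will then follow by induction on $k$, provided one can establish the slab contraction estimate
\begin{equation*}
\|\psi((k+1)T_0,\cdot)\|_{C^{0}}\le \xi\,\|\psi(kT_0,\cdot)\|_{C^{0}},\qquad \xi\in(0,1),
\end{equation*}
uniformly in $k$ for all $0<\epsilon<\epsilon_2$, together with a companion $C^{1}$ bound preventing the quasilinear iteration from breaking down across successive slabs.

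The slab contraction will be obtained by the method of characteristics. Fix $(t,x)$ with $t\in[(k+1)T_0,(k+2)T_0]$; since $T_0=\max_{i}\sup_{\phi}L/|\lambda_i(\phi+\Phi)|$, every backward characteristic issuing from $(t,x)$ must meet the lateral boundary inside the preceding slab, where the boundary condition reflects it into the other family with a multiplicative factor $\kappa_1$ (at $x=L$) or $\kappa_2$ (at $x=0$). Integrating the diagonal equation for $\psi_i$ along such a characteristic gives an exponential factor $\exp(\frac{1}{2}\int\beta\,ds)\le 1$ from the non-positive damping, multiplicative $\kappa_i$ factors at each reflection, and an additive term of size $O(\epsilon)\sup|\psi|$ from the off-diagonal coupling and the quasilinear correction. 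Chaining these contributions along the finitely many reflected segments yields
\begin{equation*}
|\psi(t,x)|\le \bigl(\max\{|\kappa_1|,|\kappa_2|\}+C\epsilon\bigr)\,\|\psi(kT_0,\cdot)\|_{C^{0}},
\end{equation*}
which is strictly less than $\|\psi(kT_0,\cdot)\|_{C^{0}}$ once $\epsilon_2$ is taken so small that $C\epsilon_2<1-\max\{|\kappa_1|,|\kappa_2|\}$; uniqueness follows by applying the same characteristic estimate to the difference of two solutions with identical data.

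The main obstacle is the off-diagonal source $\tfrac{\beta}{2}(\psi_1+\psi_2)$, which couples $\psi_1$ and $\psi_2$ along a single characteristic and hence rules out a naive component-wise contraction: the characteristic estimate must be carried out for $|\psi_1|+|\psi_2|$ simultaneously, exploiting the non-strict diagonal dominance of the damping emphasized in the introduction so that the coupling does not swamp the boundary-reflection contraction. The parallel $C^{1}$ estimate required for the iteration to survive across slabs is also delicate: one has to differentiate the equations along characteristics, derive transport equations for $\partial_t\psi$ and $\partial_x\psi$ with controllable lower-order coefficients, and repeat the same book-keeping at the boundary; the $C^{1}$ compatibility between $\phi_0$ and $\phi_{i_b}$ assumed in the statement is precisely what makes this step legitimate at $t=0$.
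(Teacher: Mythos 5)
Your overall strategy (study $\psi=\phi-\phi^{(T_*)}$, exploit that it obeys a diagonal system with homogeneous dissipative boundary conditions, and prove a slab-by-slab $C^{0}$ contraction by integrating along backward characteristics over a time window of length $T_0$) is the same strategy the paper uses. However, your stated contraction estimate
\begin{equation*}
|\psi(t,x)|\le\bigl(\max\{|\kappa_1|,|\kappa_2|\}+C\epsilon\bigr)\|\psi(kT_0,\cdot)\|_{C^{0}}
\end{equation*}
is not correct, and the error is exactly at the point you flag in your final paragraph but do not resolve. The off-diagonal source $\tfrac{\beta}{2}\psi_j$ (for $j\neq i$) is \emph{not} an $O(\epsilon)$ term: in this paper $\beta$ is only assumed bounded and non-positive, not small (removing the smallness assumption of \cite{Zhang} is the main novelty of the paper). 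When you integrate the $\psi_i$ equation along a backward characteristic from time $t$ to the boundary hit at time $t'$, the diagonal damping contributes the exponential factor $\sigma:=\exp\bigl(\tfrac12\int_{t'}^{t}\beta\,d\tau\bigr)\in(0,1]$, but the off-diagonal $\tfrac{\beta}{2}\psi_j$ integrates to exactly $(1-\sigma)\sup|\psi_j|$, an $O(1)$ contribution. The correct bound is therefore $|\psi_i(t,x)|\le\bigl(\sigma|\kappa_i|+(1-\sigma)\bigr)\sup|\psi_j|+C\epsilon\sup|\psi|$, whose coefficient $1-\sigma(1-|\kappa_i|)+C\epsilon$ is still less than $1$ for small $\epsilon$, but can be arbitrarily close to $1$ for strongly damped long characteristics; your coefficient $\kappa+C\epsilon$ only matches in the undamped case $\beta\equiv0$.

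The paper sidesteps this book-keeping by working in the $x$-form (multiplying by $\nu_i$ and swapping the roles of $t,x$) and introducing the integrating factor $F_1(t,x)=e^{\int_x^{L}\frac{\beta}{2}\nu_1(\Phi)\,ds}$ (resp.\ $F_2$). After multiplication by $F_1$, the diagonal damping is absorbed into $F_1\psi_1$, while the off-diagonal term $\tfrac{\beta}{2}\nu_1(\Phi)\psi_2$ becomes precisely $-\partial_x F_1\cdot\psi_2$, so integrating from $L$ to $x$ produces the boundary contribution $|\kappa_1|/F_1(t,x)$ and the coupling contribution $(F_1(t,x)-1)/F_1(t,x)$, which sum to $1-(1-|\kappa_1|)/F_1(t,x)\le 1-(1-\kappa)/M_0<1$ uniformly. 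This is the concrete realization of the ``nonstrictly diagonally dominant structure'' invoked in the introduction. Your plan can be salvaged by replacing the claimed rate $\kappa+C\epsilon$ with $1-\sigma_{\min}(1-\kappa)+C\epsilon$ (with $\sigma_{\min}=e^{\beta_*T_0/2}$), or more cleanly by adopting the integrating-factor device; either way the inductive hypothesis must, as in the paper, cover the full interval $[t_*,\tau]$ of length up to $2T_0$ rather than just $[kT_0,(k+1)T_0]$, so the backward characteristics from the new slab remain inside the region where the hypothesis applies. Finally, for the existence/uniqueness of the global $C^{1}$ solution $\phi$ itself, the paper simply invokes a global well-posedness lemma of Li's type (\lemref{l1}) rather than re-running the iteration on each slab; your slab construction is workable but requires the ``companion $C^{1}$ bound'' you mention to be made precise, which adds work the paper avoids.
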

\indent The uniqueness of the time-periodic solution is a direct consequence from~\theref{t2} by taking $t\rightarrow+\infty$.
\begin{corollary}\label{t3}
(Uniqueness of the time-periodic solution) There exists a constant $\epsilon_{3}\in(0,\epsilon_{2})$, such that for any given $\epsilon\in(0,\epsilon_{3})$ and any given $C^{1}$ smooth functions $\phi_{i_{b}}(t)(i=1,2)$ satisfying~\eqref{b11}-\eqref{b12}, the corresponding time-periodic solution $\phi=\phi^{(T_{*})}(t,x)$ obtained in~\theref{t1} is unique.
\end{corollary}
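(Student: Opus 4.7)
The plan is to derive uniqueness by combining the exponential stability estimate in \theref{t2} with the common $T_*$-periodicity of any two candidate solutions. Suppose, for contradiction-free purposes, that $\phi_1=\phi_1^{(T_*)}$ and $\phi_2=\phi_2^{(T_*)}$ are two $C^1$ time-periodic solutions of~\eqref{b7}--\eqref{b10} corresponding to the same boundary data $\phi_{i_b}$ $(i=1,2)$; by \eqref{b16} each satisfies $\|\phi_j\|_{C^1(D)}\leq C_E\epsilon$. I would first select $\epsilon_3\in(0,\epsilon_2)$ small enough that $C_E\epsilon_3$ lies inside the smallness window required by \theref{t2}, so that initial data of size $C_E\epsilon$ are admissible.

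Next I would feed $\phi_1$ back into \theref{t2}. Take $\phi_0(x):=\phi_1(0,x)$ as initial data. Because $\phi_1$ is itself a global $C^1$ solution of the IBVP, $\phi_0$ is $C^1$, satisfies the smallness bound~\eqref{b14}, and automatically fulfils the $C^1$ compatibility conditions at the corners $(0,0)$ and $(0,L)$. Invoking \theref{t2} with reference periodic solution $\phi^{(T_*)}=\phi_2$, the unique global $C^1$ solution with this initial datum---which must coincide with $\phi_1$ by the uniqueness assertion of \theref{t2}---obeys
\begin{equation*}
\|\phi_1(t,\cdot)-\phi_2(t,\cdot)\|_{C^0}\leq C_S\epsilon\,\xi^{[t/T_0]},\qquad\forall\,t\geq 0.
\end{equation*}

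Finally I would exploit periodicity. Fix any $t_0\in[0,T_*]$. For every $k\in\mathbb{N}$, periodicity gives $\phi_j(t_0,\cdot)=\phi_j(t_0+kT_*,\cdot)$ for $j=1,2$, so
\begin{equation*}
\|\phi_1(t_0,\cdot)-\phi_2(t_0,\cdot)\|_{C^0}=\|\phi_1(t_0+kT_*,\cdot)-\phi_2(t_0+kT_*,\cdot)\|_{C^0}\leq C_S\epsilon\,\xi^{[(t_0+kT_*)/T_0]}.
\end{equation*}
Letting $k\to\infty$ and using $\xi\in(0,1)$ forces the left-hand side to vanish; since $t_0$ is arbitrary and both solutions are $T_*$-periodic, $\phi_1\equiv\phi_2$ on $D$. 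There is no deep analytic obstacle in this argument---the only point requiring care is the consistent chaining of smallness constants ($C_E\epsilon_3<\epsilon_2$) so that the initial slice of one periodic solution is legitimately covered by the hypotheses of \theref{t2}; the corner compatibility, which might otherwise be delicate, is automatic because $\phi_1(0,\cdot)$ is the trace of a genuine $C^1$ global solution.
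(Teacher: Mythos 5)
Your argument is correct and is exactly the paper's intended route: the authors state only that uniqueness ``is a direct consequence from \theref{t2} by taking $t\to+\infty$,'' and you have fleshed out precisely those omitted steps -- feed the initial slice $\phi_1(0,\cdot)$ (whose compatibility and $C^1$ smallness are automatic from $\phi_1$ being a global $C^1$ time-periodic solution satisfying \eqref{b16}) into \theref{t2} with $\phi_2$ as the reference periodic solution, obtain the exponential decay of $\|\phi_1(t,\cdot)-\phi_2(t,\cdot)\|_{C^0}$, and then use $T_*$-periodicity of the difference to send $k\to\infty$ and conclude it vanishes identically.
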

\begin{theorem}\label{t4}
(Regularity of the time-periodic solution) If $\phi_{i_{b}}(t)(i=1,2)$ satisfy~\eqref{b11}-\eqref{b12} and possess further $W^{2,\infty}$ regularity with
\begin{align}\label{b18}
\mathop{\max}\limits_{i=1,2}\|m_{i_{b}}''(t)\|_{L^{\infty}}\leq M_{2}<+\infty,
\end{align}
then there exist constants $C_{R}>0$ and $\epsilon_{4}\in(0,\epsilon_{1})$, such that for any given $\epsilon\in(0,\epsilon_{4})$, the time-periodic solution $\phi=\phi^{(T_{*})}(t,x)$ provided by~\theref{t1} is also a $W^{2,\infty}$ function with
\begin{align}\label{b19}
\max\{\|\partial_{t}^{2}\phi^{(T_{*})}\|_{L^{\infty}},\|\partial_{t}\partial_{x}\phi^{(T_{*})}\|_{L^{\infty}},\|\partial_{x}^{2}\phi^{(T_{*})}\|_{L^{\infty}}\}
\leq(1+A_{0})^{2}C_{R}<+\infty,
\end{align}
where $A_{0}$ is defined in~\eqref{b13}.
\end{theorem}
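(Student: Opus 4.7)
The strategy is to revisit the linearized iteration scheme already used in the proof of Theorem~\ref{t1} and to upgrade all a priori estimates from $C^1$ to $W^{2,\infty}$. At the $k$-th step one solves a linear transport system with characteristic speeds $\lambda_i(\phi^{(k)}+\Phi)$ frozen, subject to the dissipative boundaries \eqref{b9}-\eqref{b10} and to the source $\tfrac{\beta}{2}(\phi_1^{(k)}+\phi_2^{(k)})$, producing a new time-periodic iterate $\phi^{(k+1)}$. Theorem~\ref{t1} already yields convergence and the $C^1$ bound. What must be added here is a uniform second-order estimate, which then passes to the limit by weak-$*$ compactness in $W^{2,\infty}$ to give the regularity of $\phi^{(T_*)}$.

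\textbf{Step 1 (pure time derivatives).}
Differentiate the linearized equations twice in $t$. Since $\phi_{i_b}(t)\in W^{2,\infty}$ with $\|\phi_{i_b}''\|_{L^\infty}\le M_2$, and since the coefficients and the $C^2$-smooth damping $\beta(t,x)$ depend smoothly on $(t,x)$ and on the previous iterate, the quantity $\partial_t^2\phi^{(k+1)}$ satisfies a linear transport system of exactly the same structure as $\phi^{(k+1)}$ with boundary data driven by $\phi_{i_b}''(t)$ plus lower-order controlled quantities. Because $|\kappa_1|,|\kappa_2|<1$ and $\beta_*\le\beta\le0$, the same dissipative mechanism that closed the $C^0$ estimate in Theorem~\ref{t1} closes an $L^\infty$ estimate of the form $\|\partial_t^2\phi^{(k+1)}\|_{L^\infty}\le C_R$, uniformly in $k$, provided $\epsilon<\epsilon_4$ is small enough.

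\textbf{Step 2 (converting $\partial_t$ bounds into $\partial_x$ bounds).}
Along each characteristic the equations rewrite as
\begin{equation*}
\partial_x\phi_i^{(k+1)}=\nu_i\!\bigl(\phi^{(k)}+\Phi\bigr)\Bigl[\tfrac{\beta}{2}\bigl(\phi_1^{(k+1)}+\phi_2^{(k+1)}\bigr)-\partial_t\phi_i^{(k+1)}\Bigr],\qquad i=1,2,
\end{equation*}
where $|\nu_i|\le A_0$ by~\eqref{b13}. Applying this identity to already-controlled quantities and differentiating it once more — in $t$ to produce $\partial_t\partial_x\phi^{(k+1)}$, and in $x$ to produce $\partial_x^2\phi^{(k+1)}$ (the latter requiring one more use of the same identity to re-express $\partial_x$ in terms of $\partial_t$) — yields the bound in \eqref{b19} with the factor $(1+A_0)$ appearing once for each spatial derivative converted from a temporal derivative, hence the $(1+A_0)^2$ prefactor.

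\textbf{Step 3 (passage to the limit).}
By Step~1 the sequence $\{\phi^{(k)}\}$ is uniformly bounded in $W^{2,\infty}(D)$. By Theorem~\ref{t1} it converges in $C^1$ to $\phi^{(T_*)}$, and weak-$*$ compactness in $L^\infty$ of the second derivatives shows that $\phi^{(T_*)}\in W^{2,\infty}(D)$ with the same bound. Time-periodicity of all derivatives is preserved since each iterate is constructed as a periodic function in $t$.

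\textbf{Main obstacle.}
The delicate point is Step~1: one must verify that double differentiation of the boundary relations \eqref{b9}-\eqref{b10} — after accounting for the inhomogeneous source that appears when one differentiates $\kappa_i\phi_j(t,\cdot)$ through the characteristic — still produces an \emph{admissible dissipative} problem, so that $|\kappa_i|<1$ and $\beta\le0$ jointly dominate the boundary and source contributions from $\phi_{i_b}''$ and from the frozen coefficients. The smallness $\epsilon_4$ has to be chosen so that the perturbative terms coming from the nonlinear dependence of $\lambda_i$ on $\phi$ and from $\partial_t\beta,\partial_x\beta$ do not destroy the strict contraction inherited from $|\kappa_i|<1$ at the second-derivative level; otherwise the reflections at the two boundaries would accumulate geometrically and the $L^\infty$ bound on $\partial_t^2\phi^{(k+1)}$ would blow up as $k\to\infty$.
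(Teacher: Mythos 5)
Your proposal follows essentially the same route as the paper: the paper's Proposition~5.1 establishes exactly the uniform-in-$l$ bounds $\|\partial_t^2\phi_i^{(l)}\|_{L^\infty}\le C_R$, $\|\partial_t\partial_x\phi_i^{(l)}\|_{L^\infty}\le A_0C_R$, $\|\partial_x^2\phi_i^{(l)}\|_{L^\infty}\le A_0^2C_R$ by differentiating the iteration scheme and its boundary conditions once more in $t$ (using $\|\phi_{i_b}''\|_{L^\infty}\le M_2$ and $|\kappa_i|<1$ with the weighted quantities $F_iz_i^{(l)}$ to close the contraction), then converting temporal derivatives to spatial ones through the transport equation with the $|\nu_i|\le A_0$ bound, and finally passes to the limit by weak-$*$ compactness combined with the $C^1$ convergence from Theorem~2.1. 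The only cosmetic difference is your bookkeeping of the prefactor as $(1+A_0)$ per conversion versus the paper's powers of $A_0$ majorized by $(1+A_0)^2$; the logic and the key obstacle (preserving the dissipative contraction at the second-derivative level for small $\epsilon$) are identified correctly.
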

\begin{theorem}\label{t5}
(Stabilization around the time-periodic solution) Assume that~\eqref{b18} holds, then there exist constants $C_{S}^{*}>0$ and $\epsilon_{5}\in(0,\min\{\epsilon_{2},\epsilon_{4}\})$, such that for any given $\epsilon\in(0,\epsilon_{5})$, we have not only the $C^{0}$ convergence result~\eqref{b17} in~\theref{t2}, but also the $C^{1}$ exponential convergence as
\begin{align}
\max\{\|\partial_{t}\phi(t,\cdot)-\partial_{t}\phi^{(T_{*})}(t,\cdot)\|_{C^{0}},\|\partial_{x}\phi(t,\cdot)-\partial_{x}\phi^{(T_{*})}(t,\cdot)\|_{C^{0}}\}
&\leq A_{0}C_{S}^{*}\epsilon\xi^{[t/T_{0}]},\notag\\
&\forall t\geq0.\label{b20}
\end{align}
\end{theorem}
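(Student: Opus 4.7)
The plan is to compare the genuine solution $\phi$ with the time-periodic solution $\phi^{(T_*)}$ by introducing $\psi=\phi-\phi^{(T_*)}$. From \theref{t2} the $C^0$-norm $\|\psi(t,\cdot)\|_{C^0}$ is already controlled by $C_S\epsilon\xi^{[t/T_0]}$, so the task reduces to propagating the same exponential decay rate to $\partial_t\psi$ and $\partial_x\psi$. The crucial new ingredient is \theref{t4}, which supplies uniform $L^\infty$ bounds on $\partial^2\phi^{(T_*)}$; this is exactly what is needed in order to differentiate the equation for $\psi$ once without losing regularity.

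First I would subtract the Riemann-invariant systems \eqref{b7} satisfied by $\phi$ and by $\phi^{(T_*)}$. The difference $\psi_i$ satisfies a linear diagonal transport system
\begin{equation*}
\partial_t\psi_i+\lambda_i(\phi+\Phi)\partial_x\psi_i
=-\bigl(\lambda_i(\phi+\Phi)-\lambda_i(\phi^{(T_*)}+\Phi)\bigr)\partial_x\phi^{(T_*)}_i+\tfrac{\beta}{2}(\psi_1+\psi_2),
\end{equation*}
with bounded coefficients. Differentiating this equation in $x$ produces, for $w_i:=\partial_x\psi_i$, a linear diagonal system whose source is linear in $\psi$ and $w$ with coefficients involving $\partial_x\phi^{(T_*)}$ and $\partial_x^2\phi^{(T_*)}$; by \theref{t1} and \theref{t4} these background quantities are bounded by $(1+A_0)^2 C_R$. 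Differentiating the boundary relations \eqref{b9}--\eqref{b10} in $t$ yields dissipative boundary conditions for the time-derivatives with the same reflection coefficients $\kappa_1,\kappa_2$ together with a forcing from $\phi_{i_b}'(t)$ of size $\epsilon$; spatial derivatives on the boundary are then recovered from the PDE itself, costing at worst a factor comparable to $A_0$ through \eqref{b13}.

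Next I would run the standard characteristic-method estimate on each time slab of length $T_0$. Integrating the equation for $w_i$ along an $i$-characteristic and using the $C^0$ decay of $\psi$ from \theref{t2} as a known right-hand side yields a recursion of the form
\begin{equation*}
\|w(t+T_0,\cdot)\|_{C^0}\leq\theta\,\|w(t,\cdot)\|_{C^0}+C\epsilon\,\xi^{[t/T_0]},
\end{equation*}
where $\theta<1$ is obtained by combining the boundary dissipation $|\kappa_1\kappa_2|<1$ with the smallness of $\epsilon$ coming from the damping and linearization errors. A discrete Gr\"onwall argument on this recursion, with its geometrically decaying forcing, delivers $\|w(t,\cdot)\|_{C^0}\leq A_0 C_S^*\,\epsilon\,\xi^{[t/T_0]}$, and the parallel bound for $\partial_t\psi$ follows either by the same method with $t$-derivatives or directly from the PDE via \eqref{b13}, whence the estimate \eqref{b20}.

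The main obstacle is closing the bookkeeping at the boundaries. Because the two characteristic families are coupled through \eqref{b9}--\eqref{b10}, every backward $i$-characteristic drawn from a point in the slab $[t,t+T_0]\times[0,L]$ can undergo several reflections before it either re-enters the previous slab, where the inductive exponential-decay hypothesis supplies the needed smallness, or terminates at a boundary time where the forcing from $\phi_{i_b}'$ is bounded by $\epsilon$. One must choose $T_0$ exactly as in \theref{t2} and take $\epsilon_5$ small enough that the combined contribution of all such reflections forms a genuine contraction $\theta<1$; this balance between boundary dissipation, damping smallness, and source decay is what dictates the final constants $\epsilon_5$ and $C_S^*$.
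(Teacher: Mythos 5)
Your plan is essentially the one the paper carries out: subtract the equations for $\phi$ and $\phi^{(T_*)}$, differentiate once, use the $C^0$ decay from \theref{t2} and the $W^{2,\infty}$ bound on $\phi^{(T_*)}$ from \theref{t4} to control the linearization error, propagate the estimate slab-by-slab along characteristics, and recover the remaining first derivative algebraically from the PDE via \eqref{b13}. Two small points would need cleaning up if you carried it out.

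First, the contraction is not governed by $|\kappa_1\kappa_2|<1$ but by $|\kappa_1|<1$ and $|\kappa_2|<1$ separately, and your worry about several reflections does not actually arise. In the paper's formulation one integrates the (space-as-time) equation for $z_i-z_i^{(T_*)}$ along a single $i$-characteristic from $(\hat t,\hat x)$ to the appropriate boundary; by the choice of $T_0$ this lands at a time in $[\hat t-T_0,\hat t]\subseteq[NT_0,\tau]$, where the inductive hypothesis already controls \emph{both} Riemann invariants. The boundary condition then trades $z_i$ there for $\kappa_i$ times $z_{j}$, $j\neq i$, which is bounded directly by the hypothesis — no further reflection is needed, and a single factor $|\kappa_i|<1$, together with the damping gain $1/F_i<1$, already makes the map a contraction. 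Second, the paper differentiates in $t$ rather than $x$: this gives clean boundary data $\phi_{i_b}'(t)$ for $\partial_t\phi_i$ from \eqref{b9}--\eqref{b10}, after which $\partial_x\phi_i-\partial_x\phi_i^{(T_*)}$ is recovered from the PDE; differentiating first in $x$ as you suggest forces you to manufacture boundary data for $\partial_x\psi_i$ from the equation, which works but is less natural. Finally, to avoid any implicit smallness on $\beta$, the paper weights the transported quantity with $F_i(t,x)=\exp(\pm\int\tfrac{\beta}{2}\nu_i(\Phi))$; equivalently, you must track the exact Gronwall factor contributed by the damping coefficient rather than estimate it crudely, since $\beta_*$ is not assumed small.
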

\section{Existence of Time-periodic Solutions}\label{s3}
\indent\indent In this section, we give the proof of~\theref{t1} by applying the linearized iteration method.\\
\indent Firstly, multiplying $\nu_{i}(\phi+\Phi)=\lambda_{i}^{-1}(\phi+\Phi)(i=1,2)$ on both sides of the $i$-th equation of~\eqref{b7} for $i=1,2$ and swapping the positions of $t$ and $x$, we have
\begin{align}
\partial_{x}\phi_{1}+\nu_{1}(\phi+\Phi)\partial_{t}\phi_{1}=&\frac{\beta}{2}\nu_{1}(\Phi)\phi_{1}+\frac{\beta}{2}\Big(\nu_{1}(\phi+\Phi)-\nu_{1}(\Phi)\Big)\phi_{1}\notag\\
&+\frac{\beta}{2}\nu_{1}(\phi+\Phi)\phi_{2},\label{c1}\\
\partial_{x}\phi_{2}+\nu_{2}(\phi+\Phi)\partial_{t}\phi_{2}=&\frac{\beta}{2}\nu_{2}(\Phi)\phi_{2}+\frac{\beta}{2}\Big(\nu_{2}(\phi+\Phi)-\nu_{2}(\Phi)\Big)\phi_{2}\notag\\
&+\frac{\beta}{2}\nu_{2}(\phi+\Phi)\phi_{1}.\label{c2}
\end{align}
Then using~\eqref{c1}-\eqref{c2} and~\eqref{b9}-\eqref{b10}, we establish the following "initial"-value problem of linearized system
\begin{align}
\partial_{x}\phi_{1}^{(l)}+\nu_{1}(\phi^{(l-1)}+\Phi)\partial_{t}\phi_{1}^{(l)}=&\frac{\beta}{2}\nu_{1}(\Phi)\phi_{1}^{(l)}+\frac{\beta}{2}\Big(\nu_{1}(\phi^{(l-1)}+\Phi)
-\nu_{1}(\Phi)\Big)\phi_{1}^{(l-1)}\notag\\
&+\frac{\beta}{2}\nu_{1}(\phi^{(l-1)}+\Phi)\phi_{2}^{(l-1)},\label{c3}
\end{align}
\begin{align}
x=L:~~~~\phi_{1}^{(l)}(t,L)&=\phi_{1_{br}}(t)+\kappa_{1}\phi_{2_{r}}^{(l-1)}(t,L)
\label{c4}
\end{align}
and
\begin{align}
\partial_{x}\phi_{2}^{(l)}+\nu_{2}(\phi^{(l-1)}+\Phi)\partial_{t}\phi_{2}^{(l)}=&\frac{\beta}{2}\nu_{2}(\Phi)\phi_{2}^{(l)}+\frac{\beta}{2}\Big(\nu_{2}(\phi^{(l-1)}+\Phi)
-\nu_{2}(\Phi)\Big)\phi_{2}^{(l-1)}\notag\\
&+\frac{\beta}{2}\nu_{2}(\phi^{(l-1)}+\Phi)\phi_{1}^{(l-1)},\label{c5}
\end{align}
\begin{align}
x=0:~~~~\phi_{2}^{(l)}(t,0)&=\phi_{2_{bl}}(t)+\kappa_{2}\phi_{1_{l}}^{(l-1)}(t,0),
\label{c6}
\end{align}
where
\begin{align*}
\phi_{1_{br}}(t)=\left\{
\begin{aligned}
\phi_{1_{b}}(t),\quad t\geq0,\\
\phi_{1_{b'}}(t),\quad t<0,
\end{aligned}\right.\quad
\phi_{2_{bl}}(t)=\left\{
\begin{aligned}
\phi_{2_{b}}(t),\quad t\geq0,\\
\phi_{2_{b'}}(t),\quad t<0
\end{aligned}\right.
\end{align*}
are the time-periodic extensions of $\phi_{i_{b}}(t)(i=1,2)$.\\
\indent Here we select a special linearized iteration scheme~\eqref{c3} and~\eqref{c5} to make better use of the nonstrictly diagonally dominant structure brought by damping.\\
\indent For system~\eqref{c3}-\eqref{c6}, we start to iterate from
\begin{align}
\phi^{(0)}(t,x)=0 \label{c7}
\end{align}
and prove the following a priori estimates.
\begin{proposition}\label{p1}
There exists a small enough constant $\epsilon_{1}>0$ and constants $M_{1}>0$, $C_{1}>0$ and $\theta\in(0,1)$, such that for any given $\epsilon\in(0,\epsilon_{1})$, the following estimates hold:
\begin{align}
&\phi^{(l)}(t+T_{*},x)=\phi^{(l)}(t,x),\quad\forall(t,x)\in \mathbb{R}\times[0,L],l\in \mathbb{N}_{+},\label{c8}\\
&\|\phi^{(l)}\|_{C^{1}}\leq (C_{1}+M_{1})\epsilon,\quad \forall l\in \mathbb{N}_{+}, \label{c9}\\
&\|\phi^{(l)}-\phi^{(l-1)}\|_{C^{0}}\leq M_{1}\epsilon\theta^{l}, \quad \forall l\in \mathbb{N}_{+},\label{c10}\\
&\mathop{\max}\limits_{i=1,2}\{\varpi(\delta|\partial_{t}\phi_{i}^{(l)})+\varpi(\delta|\partial_{x}\phi_{i}^{(l)})\}\leq(\frac{1}{3}+\frac{1}{2}A_{0})\Omega(\delta), \quad \forall l\in \mathbb{N}_{+},\label{c11}
\end{align}
where
\begin{align}
\|\phi^{(l)}\|_{C^{1}(D)}&\mathop{=}\limits^{def.}\mathop{\max}\limits_{i=1,2}\{\|\phi_{i}^{(l)}\|_{C^{0}(D)},\|\partial_{t}\phi_{i}^{(l)}\|_{C^{0}(D)}
 ,\|\partial_{x}\phi_{i}^{(l)}\|_{C^{0}(D)}\},\notag\\
\varpi(\delta|h)&=\mathop{\sup}\limits_{\substack{|t_{1}-t_{2}|\leq\delta\\|x_{1}-x_{2}|\leq\delta}}|h(t_{1},x_{1})-h(t_{2},x_{2})|,\notag
\end{align}
and $\Omega(\delta)$ is a continuous function of $\delta\in(0,1)$ which is independent of $l$ and satisfies
$$
\mathop{\lim}\limits_{\delta\rightarrow0^{+}}\Omega(\delta)=0.
$$
\end{proposition}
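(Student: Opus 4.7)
The plan is to prove Proposition~\ref{p1} by induction on $l\in\mathbb{N}_+$, starting from $\phi^{(0)}\equiv 0$. The structural observation I would exploit throughout is that \eqref{c3}--\eqref{c6} decouples into two independent scalar transport problems for $\phi_1^{(l)}$ and $\phi_2^{(l)}$, and the diagonal damping term $\tfrac{\beta}{2}\nu_i(\Phi)\phi_i^{(l)}$ retained on the right-hand side has a favorable sign: with $\beta\le 0$, $\nu_1(\Phi)=-1/\underline{c}<0$, and $\nu_2(\Phi)=1/\underline{c}>0$, integrating \eqref{c3} along its characteristic from $x=L$ inward and \eqref{c5} from $x=0$ outward produces an exponential damping factor bounded by $\exp(-|\beta_*|L/(2\underline{c}))<1$. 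This is the nonstrictly diagonally dominant mechanism highlighted in the introduction and is precisely what distinguishes this iteration from the one in~\cite{Zhang}.

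Given this, the four claims are handled as follows. Periodicity \eqref{c8} is immediate: every coefficient, boundary datum, and source in \eqref{c3}--\eqref{c6} is $T_*$-periodic in $t$ by \eqref{a4}, \eqref{b11}, and the inductive hypothesis, and uniqueness of the linear transport problem then forces $\phi^{(l)}(\cdot+T_*,\cdot)\equiv\phi^{(l)}$. For the $C^1$ bound \eqref{c9}, I would represent $\phi_1^{(l)}$ by the Duhamel formula along its characteristic, with boundary value from \eqref{c4} and source $O(\epsilon)$ by \eqref{a3}; the $C^0$ part then closes because the $|\kappa_1|<1$ feedback from $\phi_2^{(l-1)}$ is beaten by the exponential damping. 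The $t$-derivatives are controlled by differentiating the equations in $t$ and rerunning the characteristic integration, while $\partial_x\phi_i^{(l)}$ is recovered algebraically from \eqref{c3}, \eqref{c5}. Choosing $C_1$ large (depending only on $\beta_*, L, \underline{c}, |\kappa_i|$) and $\epsilon_1$ small closes the induction. The contraction \eqref{c10} follows by subtracting the $l$-th and $(l-1)$-th systems: the difference $w^{(l)}:=\phi^{(l)}-\phi^{(l-1)}$ satisfies the same type of linear transport with source proportional to $\nu_i(\phi^{(l-1)}+\Phi)-\nu_i(\phi^{(l-2)}+\Phi)=O(w^{(l-1)})$ plus a $\kappa_i w^{(l-1)}$ boundary feedback, and characteristic integration against the exponential damping yields $\|w^{(l)}\|_{C^0}\le\theta\|w^{(l-1)}\|_{C^0}$ with some $\theta\in(0,1)$ once $\epsilon_1$ is sufficiently small.

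The main obstacle will be \eqref{c11}. The difficulty is that the modulus of continuity of $\partial_t\phi_i^{(l)}$ and $\partial_x\phi_i^{(l)}$ must be bounded uniformly in $l$, while the characteristic map used in the representation itself depends on $\phi^{(l-1)}$, so the foot of the characteristic, the coefficient, and the integrand all contribute to the modulus. My plan is to construct $\Omega(\delta)$ as the fixed point of a self-improving estimate of the schematic form
\[
\varpi(\delta|\partial_t\phi_i^{(l)})+\varpi(\delta|\partial_x\phi_i^{(l)})\le\bigl(\tfrac{1}{3}+\tfrac{1}{2}A_0\bigr)\Omega(\delta)+o_\epsilon(1),
\]
where the coefficient $\tfrac{1}{2}A_0$ comes from applying \eqref{b13} to the integrand over one characteristic sweep, the $\tfrac{1}{3}$ arises from the boundary contribution in \eqref{c4}, \eqref{c6}, and the $o_\epsilon(1)$ piece is absorbed once $\epsilon<\epsilon_1$. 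The delicate bookkeeping is to verify that these constants close independently of $l$ and that $\Omega(\delta)\to 0$ as $\delta\to 0^+$. Once \eqref{c8}--\eqref{c11} are in hand, Arzel\`a--Ascoli combined with \eqref{c10} produces the $C^1$ limit $\phi^{(T_*)}$ solving \eqref{b7}--\eqref{b10} that \theref{t1} claims.
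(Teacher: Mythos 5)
Your overall framework—induction on $l$, characteristic integration, periodicity from uniqueness of the linear problem, contraction for the Cauchy property—matches the paper, and your recognition of the favorable sign of the retained diagonal term and its exponential effect is the right structural observation (the paper implements this by multiplying through by the integrating factors $F_i(t,x)=e^{\pm\int\frac{\beta}{2}\nu_i(\Phi)}\ge 1$ and transporting $F_i\phi_i^{(l)}$, which is the concrete form of your "exponential damping beats the $\kappa_i$ feedback"). The paper also uses one further device you do not mention, an "estimation by twice integration": a first pass multiplying by $\mathrm{sgn}(z_1^{(l)})$ to get a rough $C^0$ bound on $z_1^{(l)}$, needed to control the quadratic term $\sum_j\frac{\partial\nu_1}{\partial\phi_j}z_j^{(l-1)}z_1^{(l)}$ before the second, sharper pass with $F_1z_1^{(l)}$; without some substitute for this you cannot close the derivative bound cleanly.

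The genuine gap is in the modulus-of-continuity step \eqref{c11}, which you correctly flag as the main obstacle but then propose to handle by a "fixed point of a self-improving estimate" with coefficient $\frac{1}{3}+\frac{1}{2}A_0$. This cannot work as stated: $\frac{1}{3}+\frac{1}{2}A_0\ge 1$ as soon as $A_0\ge\frac{4}{3}$, so the schematic inequality $\varpi\le(\frac{1}{3}+\frac{1}{2}A_0)\Omega+o_\epsilon(1)$ is not a contraction and admits no useful fixed point. The paper does something different in two respects. First, $\Omega(\delta)$ is not found as a fixed point but is \emph{chosen explicitly} from the data, $\Omega(\delta)=\frac{24}{1-\kappa}[A_0+1]\bigl(\sqrt{\epsilon}\delta+\varpi(\delta|\phi'_{1_b})+\varpi(\delta|\phi'_{2_b})+\varpi(\delta|\partial_t\beta)\bigr)$, and the work is to show the iteration preserves bounds in terms of this prescribed $\Omega$. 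Second—and this is the idea you are missing—the induction is \emph{not} closed at the level of \eqref{c11} but at the level of a strictly stronger intermediate estimate \eqref{c16}, the pure-temporal modulus $\varpi(\delta|\partial_t\phi_i^{(l)}(\cdot,x))\le\frac{1}{8[A_0+1]}\Omega(\delta)$, whose coefficient $\frac{1}{8[A_0+1]}$ is small enough that integrating along characteristics and using $\kappa<1$, $F_1\ge1$ closes. One then upgrades to the joint temporal-plus-spatial modulus of $z_i^{(l)}$ by a characteristic-slide argument: given two points $(t_1,x_1)$, $(t_2,x_2)$ within $\delta$, slide $(t_2,x_2)$ along a characteristic to a point $(t_3,x_1)$ with $|t_3-t_1|\le(A_0+1)\delta$, then bridge $t_1$ to $t_3$ by $[A_0+1]+1$ temporal steps of size $\le\delta$, each controlled by \eqref{c16}; this is what produces $\varpi(\delta|z_i^{(l)})\le\frac{1}{3}\Omega(\delta)$. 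Finally $\partial_x\phi_i^{(l)}$ is reconstructed from the equation, which multiplies by $\nu_i$ bounded by $A_0$ and gives $\varpi(\delta|\partial_x\phi_i^{(l)})\le\frac{1}{2}A_0\Omega(\delta)$; the sum is \eqref{c11}. Your attribution of $\frac{1}{3}$ to "the boundary contribution" and $\frac{1}{2}A_0$ to "one characteristic sweep" therefore misassigns both constants: $\frac{1}{3}$ is the output of the characteristic-slide/partition step for $z_i^{(l)}$, while $\frac{1}{2}A_0$ comes from algebraically recovering $\partial_x\phi_i^{(l)}$ from the transport equation. Without the small-coefficient intermediate estimate \eqref{c16} and the partition argument, the modulus induction does not close.
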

\begin{proof}
We prove the a priori estimates~\eqref{c8}-\eqref{c11} inductively, i.e., for each $l\in \mathbb{N}_{+}$, we show
\begin{align}
&\phi_{i}^{(l)}(t+T_{*},x)=\phi_{i}^{(l)}(t,x), \quad\forall(t,x)\in \mathbb{R}\times[0,L], \forall i=1,2,\label{c12}\\
&\mathop{\max}\limits_{i=1,2}\{\|\phi_{i}^{(l)}\|_{C^{0}},\|\partial_{t}\phi_{i}^{(l)}\|_{C^{0}}\}\leq M_{1}\epsilon,\label{c13}\\
&\mathop{\max}\limits_{i=1,2}\{\|\partial_{x}\phi_{i}^{(l)}\|_{C^{0}}\}\leq C_{1}\epsilon,\label{c14}\\
&\mathop{\max}\limits_{i=1,2}\|\phi_{i}^{(l)}-\phi_{i}^{(l-1)}\|_{C^{0}}\leq M_{1}\epsilon\theta^{l},\label{c15}\\
&\mathop{\max}\limits_{i=1,2}\varpi(\delta|\partial_{t}\phi_{i}^{(l)}(\cdot,x))\leq\frac{1}{8[A_{0}+1]}\Omega(\delta),\quad \forall x\in[0,L]\label{c16}
\end{align}
and
\begin{align}
\mathop{\max}\limits_{i=1,2}\{\varpi(\delta|\partial_{t}\phi_{i}^{(l)})+\varpi(\delta|\partial_{x}\phi_{i}^{(l)})\}\leq(\frac{1}{3}+\frac{1}{2}A_{0})\Omega(\delta)\label{c17}
\end{align}
under the following hypothesis
\begin{align}
&\phi_{i}^{(l-1)}(t+T_{*},x)=\phi_{i}^{(l-1)}(t,x), \quad\forall(t,x)\in \mathbb{R}\times[0,L], \forall i=1,2,\label{c18}\\
&\mathop{\max}\limits_{i=1,2}\{\|\phi_{i}^{(l-1)}\|_{C^{0}},\|\partial_{t}\phi_{i}^{(l-1)}\|_{C^{0}}\}\leq M_{1}\epsilon,\label{c19}\\
&\mathop{\max}\limits_{i=1,2}\{\|\partial_{x}\phi_{i}^{(l-1)}\|_{C^{0}}\}\leq C_{1}\epsilon,\label{c20}\\
&\mathop{\max}\limits_{i=1,2}\|\phi_{i}^{(l-1)}-\phi_{i}^{(l-2)}\|_{C^{0}}\leq M_{1}\epsilon\theta^{l},\quad \forall l\geq2,\label{c21}\\
&\mathop{\max}\limits_{i=1,2}\varpi(\delta|\partial_{t}\phi_{i}^{(l-1)}(\cdot,x))\leq\frac{1}{8[A_{0}+1]}\Omega(\delta),\quad \forall x\in[0,L]\label{c22}
\end{align}
and
\begin{align}
\mathop{\max}\limits_{i=1,2}\{\varpi(\delta|\partial_{t}\phi_{i}^{(l-1)})+\varpi(\delta|\partial_{x}\phi_{i}^{(l-1)})\}\leq(\frac{1}{3}+\frac{1}{2}A_{0})\Omega(\delta),\label{c23}
\end{align}
where $[A_{0}+1]$ represents the integer part of $A_{0}+1$ and
$$\varpi(\delta|h(\cdot,x))=\mathop{\max}\limits_{|t_{1}-t_{2}|\leq\delta}|h(t_{1},x)-h(t_{2},x)|.$$
In the remaining parts of this section, we will use several steps to check the above estimates~\eqref{c12}-\eqref{c17} one by one.\\
$\mathbf{Step~1}:~$ Transformations of the principal equations.\\
Define the characteristic curve $t=t_{i}^{(l)}(x;t_{0},x_{0})(i=1,2)$ as the following form:
\begin{align}
\left\{
\begin{aligned}
&\frac{dt_{i}^{(l)}}{dx}(x;t_{0},x_{0})=\nu_{i}(\phi^{(l-1)}+\Phi)(t_{i}^{(l)}(x;t_{0},x_{0}),x),\\
&t_{i}^{(l)}(x_{0};t_{0},x_{0})=t_{0}.
\end{aligned}\right.\label{c24}
\end{align}
Denote
$$ F_{1}(t,x)=e^{\int_{x}^{L}\frac{\beta(t,s)}{2}\nu_{1}(\Phi)ds},~~F_{2}(t,x)=e^{-\int_{0}^{x}\frac{\beta(t,s)}{2}\nu_{2}(\Phi)ds}.$$
Noticing $\beta(t,x)\leq0,\nu_{1}(\Phi)<0,\nu_{2}(\Phi)>0$ and $x\in [0,L]$, we have
\begin{align}
&F_{1}(t,x),F_{2}(t,x)\geq1,\label{c25}\\
&\frac{\partial F_{1}(t,x)}{\partial x}=-\frac{\beta(t,x)}{2}\nu_{1}(\Phi)F_{1}(t,x)<0,\label{c26}\\
&\frac{\partial F_{1}(t,x)}{\partial t}=\int_{x}^{L}\frac{\partial_{t}\beta(t,s)}{2}\nu_{1}(\Phi)ds F_{1}(t,x),\notag\\
&\frac{\partial F_{2}(t,x)}{\partial x}=-\frac{\beta(t,x)}{2}\nu_{2}(\Phi)F_{2}(t,x)>0,\label{c27}\\
&\frac{\partial F_{2}(t,x)}{\partial t}=-\int_{0}^{x}\frac{\partial_{t}\beta(t,s)}{2}\nu_{2}(\Phi)ds F_{2}(t,x).\notag
\end{align}
Furthermore, by~\eqref{a2} and~\eqref{b13}, we get
\begin{align}
&1\leq F_{1}(t,x)\leq e^{-\frac{\beta_{*}}{2}A_{0}L}\mathop{=}\limits^{def.}M_{0}>1,\label{c28}\\
&1\leq F_{2}(t,x)\leq M_{0}.\label{c29}
\end{align}
Let $M_{1}=\frac{100}{1-\kappa}$ with $\kappa=\max\{|\kappa_{1}|,|\kappa_{2}|\}<1$,then
\begin{align}
M_{1}\geq|\kappa_{1}|M_{1}+100,\quad M_{1}\geq|\kappa_{2}|M_{1}+100.\label{c30}
\end{align}
Now, we turn problem~\eqref{c3}-\eqref{c6} into the system of $F_{i}(t,x)\phi_{i}^{(l)}$ as follows:
\begin{align}
&\partial_{x}\Big(F_{1}\phi_{1}^{(l)}\Big)+\nu_{1}(\phi^{(l-1)}+\Phi)\partial_{t}\Big(F_{1}\phi_{1}^{(l)}\Big)\notag\\
=&\frac{\beta}{2}F_{1}\Big(\nu_{1}(\phi^{(l-1)}+\Phi)-\nu_{1}(\Phi)\Big)(\phi_{1}^{(l-1)}+\phi_{2}^{(l-1)})\notag\\
&+\frac{\beta}{2}F_{1}\nu_{1}(\Phi)\phi_{2}^{(l-1)}+\nu_{1}(\phi^{(l-1)}+\Phi)F_{1}\phi_{1}^{(l)}\int_{x}^{L}\frac{\partial_{t}\beta(t,s)}{2}\nu_{1}(\Phi)ds,\label{c31}
\end{align}
\begin{align}
x=L:~~~~F_{1}(t,L)\phi_{1}^{(l)}(t,L)=\phi_{1}^{(l)}(t,L)=\phi_{1_{br}}(t)+\kappa_{1}\phi_{2_{r}}^{(l-1)}(t,L),\label{c32}
\end{align}
\begin{align}
&\partial_{x}\Big(F_{2}\phi_{2}^{(l)}\Big)+\nu_{2}(\phi^{(l-1)}+\Phi)\partial_{t}\Big(F_{2}\phi_{2}^{(l)}\Big)\notag\\
=&\frac{\beta}{2}F_{2}\Big(\nu_{2}(\phi^{(l-1)}+\Phi)-\nu_{2}(\Phi)\Big)(\phi_{1}^{(l-1)}+\phi_{2}^{(l-1)})\notag\\
&+\frac{\beta}{2}F_{2}\nu_{2}(\Phi)\phi_{1}^{(l-1)}-\nu_{2}(\phi^{(l-1)}+\Phi)F_{2}\phi_{2}^{(l)}\int_{0}^{x}\frac{\partial_{t}\beta(t,s)}{2}\nu_{2}(\Phi)ds,\label{c33}
\end{align}
\begin{align}
x=0:~~~~F_{2}(t,0)\phi_{2}^{(l)}(t,0)=\phi_{2}^{(l)}(t,0)=\phi_{2_{bl}}(t)+\kappa_{2}\phi_{1_{l}}^{(l-1)}(t,0).\label{c34}
\end{align}
By~\eqref{a4}, \eqref{b11} and~\eqref{c18}, it is easy to know that if $F_{i}(t,x)\phi_{i}^{(l)}(t,x)(i=1,2)$ solves problem~\eqref{c31}-\eqref{c34}, so does $F_{i}(t+T_{*},x)\phi_{i}^{(l)}(t+T_{*},x)(i=1,2).$ Then we get $F_{i}(t+T_{*},x)\phi_{i}^{(l)}(t+T_{*},x)=F_{i}(t,x)\phi_{i}^{(l)}(t,x)$ by the uniqueness of this linear system. Thus~\eqref{c12} is proved.\\
$\mathbf{Step~2}:~$ The proof of~\eqref{c13} and~\eqref{c14}.\\
\indent Next, we prove the $C^{0}$ estimate for $\phi_{i}^{(l)}(i=1,2)$. By the aid of conditions~\eqref{b12}, \eqref{c4}, \eqref{c6}, \eqref{c30}, we get
\begin{align}
&\|\phi_{1}^{(l)}(\cdot,L)\|_{C^{0}}\leq\epsilon+|\kappa_{1}|M_{1}\epsilon\leq M_{1}\epsilon-99\epsilon,\label{c35}\\
&\|\phi_{2}^{(l)}(\cdot,0)\|_{C^{0}}\leq M_{1}\epsilon-99\epsilon.\label{c36}
\end{align}
Then we integrate~\eqref{c31} along the $1$st characteristic curve $t=t_{1}^{(l)}(y;t,x)$ from $L$ to $x$ to get
\begin{align*}
&F_{1}(t,x)\phi_{1}^{(l)}(t,x)-F_{1}(t,L)\phi_{1}^{(l)}(t_{1}^{(l)}(L;t,x),L)\notag\\
=&\int^{x}_{L}\Big(\frac{\beta(\tau,y)}{2}F_{1}(\tau,y)\big(\nu_{1}(\phi^{(l-1)}+\Phi)-\nu_{1}(\Phi)\big)(\phi_{1}^{(l-1)}+\phi_{2}^{(l-1)})\notag\\
&+\nu_{1}(\phi^{(l-1)}+\Phi)F_{1}(\tau,y)\phi_{1}^{(l)}\int_{y}^{L}\frac{\partial_{t}\beta(\tau,s)}{2}\nu_{1}(\Phi)ds\Big)\big|_{\tau=t_{1}^{(l)}(y;t,x)}dy\notag\\
&+\int^{x}_{L}-\big(\frac{\partial}{\partial y}F_{1}(t_{1}^{(l)}(y;t,x),y)\big)\phi_{2}^{(l-1)}(t_{1}^{(l)}(y;t,x),y)dy,
\end{align*}
where $F_{1}(t,L)=1$.\\
\indent Using~\eqref{a2}-\eqref{a3}, \eqref{b13}, \eqref{c19} and~\eqref{c35}, we get
\begin{align}
\|\phi_{1}^{(l)}\|_{C^{0}}&\leq\frac{M_{1}\epsilon-99\epsilon}{F_{1}(t,x)}+C\epsilon^{2}+\frac{F_{1}(t,x)-1}{F_{1}(t,x)}M_{1}\epsilon\notag\\
&\leq\frac{M_{1}\epsilon-99\epsilon}{F_{1}(t,x)}+\frac{\epsilon}{F_{1}(t,x)}+\frac{F_{1}(t,x)-1}{F_{1}(t,x)}M_{1}\epsilon\notag\\
&=M_{1}\epsilon-\frac{98\epsilon}{F_{1}(t,x)}<M_{1}\epsilon,\label{c37}
\end{align}
where in the above second inequality, we used $C\epsilon^{2}<\frac{\epsilon}{F_{1}(t,x)}$ for a suitably small $\epsilon$.
Unless specified, in this paper $C$ denotes a generic constant.\\
\indent Similarly, we can get
\begin{align}
\|\phi_{2}^{(l)}\|_{C^{0}}<M_{1}\epsilon. \label{c38}
\end{align}
Now, we consider the temporal derivative estimates.\\
\indent Let
$$z_{i}^{(l)}=\partial_{t}\phi_{i}^{(l)},\quad i=1,2, \forall l\in \mathbb{N}.$$
Differentiating equations~\eqref{c3}, \eqref{c5}
with respect to $t$, we get
\begin{align}
&\partial_{x}z_{1}^{(l)}+\nu_{1}(\phi^{(l-1)}+\Phi)\partial_{t}z_{1}^{(l)}\notag\\
=&\frac{\beta}{2}\nu_{1}(\Phi)z_{1}^{(l)}+\frac{\partial_{t}\beta}{2}\nu_{1}(\Phi)\phi_{1}^{(l)}+\frac{\beta}{2}\nu_{1}(\Phi)z_{2}^{(l-1)}+\frac{\partial_{t}\beta}{2}\nu_{1}(\Phi)\phi_{2}^{(l-1)}\notag\\
&+\frac{\beta}{2}\Big(\nu_{1}(\phi^{(l-1)}+\Phi)-\nu_{1}(\Phi)\Big)(z_{1}^{(l-1)}+z_{2}^{(l-1)})\notag\\
&+\frac{\partial_{t}\beta}{2}\Big(\nu_{1}(\phi^{(l-1)}+\Phi)-\nu_{1}(\Phi)\Big)(\phi_{1}^{(l-1)}+\phi_{2}^{(l-1)})\notag\\
&+\frac{\beta}{2}\sum_{j=1}^{2}\frac{\partial\nu_{1}(\phi^{(l-1)}+\Phi)}{\partial\phi_{j}}(z_{j}^{(l-1)}\phi_{1}^{(l-1)}+z_{j}^{(l-1)}\phi_{2}^{(l-1)})\notag\\
&-\sum_{j=1}^{2}\frac{\partial\nu_{1}(\phi^{(l-1)}+\Phi)}{\partial\phi_{j}}z_{j}^{(l-1)}z_{1}^{(l)},\label{c39}
\end{align}
and
\begin{align}
&\partial_{x}z_{2}^{(l)}+\nu_{2}(\phi^{(l-1)}+\Phi)\partial_{t}z_{2}^{(l)}\notag\\
=&\frac{\beta}{2}\nu_{2}(\Phi)z_{2}^{(l)}+\frac{\partial_{t}\beta}{2}\nu_{2}(\Phi)\phi_{2}^{(l)}+\frac{\beta}{2}\nu_{2}(\Phi)z_{1}^{(l-1)}+\frac{\partial_{t}\beta}{2}\nu_{2}(\Phi)\phi_{1}^{(l-1)}\notag\\
&+\frac{\beta}{2}\Big(\nu_{2}(\phi^{(l-1)}+\Phi)-\nu_{2}(\Phi)\Big)(z_{1}^{(l-1)}+z_{2}^{(l-1)})\notag\\
&+\frac{\partial_{t}\beta}{2}\Big(\nu_{2}(\phi^{(l-1)}+\Phi)-\nu_{2}(\Phi)\Big)(\phi_{1}^{(l-1)}+\phi_{2}^{(l-1)})\notag\\
&+\frac{\beta}{2}\sum_{j=1}^{2}\frac{\partial\nu_{2}(\phi^{(l-1)}+\Phi)}{\partial\phi_{j}}(z_{j}^{(l-1)}\phi_{1}^{(l-1)}+z_{j}^{(l-1)}\phi_{2}^{(l-1)})\notag\\
&-\sum_{j=1}^{2}\frac{\partial\nu_{2}(\phi^{(l-1)}+\Phi)}{\partial\phi_{j}}z_{j}^{(l-1)}z_{2}^{(l)}.\label{c41}
\end{align}
Correspondingly, at the boundary $x=0$ and $x=L$, we have
\begin{align}
x=L:~~~~z_{1}^{(l)}(t,L)=\phi'_{1_{br}}(t)+\kappa_{1}z_{2_{r}}^{(l-1)}(t,L),\quad t\in \mathbb{R},\label{c40}
\end{align}
and
\begin{align}
x=0:~~~~z_{2}^{(l)}(t,0)=\phi'_{2_{bl}}(t)+\kappa_{2}z_{1_{l}}^{(l-1)}(t,0),\quad t\in \mathbb{R}.\label{c42}
\end{align}
By~\eqref{b12} and~\eqref{c19}, we get
\begin{align}
&\|z_{1}^{(l)}(\cdot,L)\|_{C^{0}}\leq\epsilon+|\kappa_{1}|M_{1}\epsilon\leq M_{1}\epsilon-99\epsilon,\label{c43}\\
&\|z_{2}^{(l)}(\cdot,0)\|_{C^{0}}\leq\epsilon+|\kappa_{2}|M_{1}\epsilon\leq M_{1}\epsilon-99\epsilon.\label{c44}
\end{align}
In order to overcome the complexity of linearized equations caused by the quasilinear nature of the original system, we introduce the method of "estimation by twice integration". We first get a rough estimate of $z_{1}^{(l)}$ as follows.\\
\indent Multiplying $\mathrm{sgn}(z_{1}^{(l)})$ on both sides of~\eqref{c39}, we get
\begin{align*}
&\partial_{x}|z_{1}^{(l)}|+\nu_{1}(\phi^{(l-1)}+\Phi)\partial_{t}|z_{1}^{(l)}|\notag\\
=&\Big(\frac{\beta}{2}\nu_{1}(\Phi)-\sum_{j=1}^{2}\frac{\partial\nu_{1}}{\partial\phi_{j}}z_{j}^{(l-1)}\Big)|z_{1}^{(l)}|+\frac{\partial_{t}\beta}{2}\mathrm{sgn}(z_{1}^{(l)})\nu_{1}(\Phi)\phi_{1}^{(l)}\notag\\
&+\frac{\beta}{2}\mathrm{sgn}(z_{1}^{(l)})\nu_{1}(\Phi)z_{2}^{(l-1)}+\frac{\partial_{t}\beta}{2}\mathrm{sgn}(z_{1}^{(l)})\nu_{1}(\Phi)\phi_{2}^{(l-1)}\notag\\
&+\frac{\beta}{2}\mathrm{sgn}(z_{1}^{(l)})\Big(\nu_{1}(\phi^{(l-1)}+\Phi)-\nu_{1}(\Phi)\Big)(z_{1}^{(l-1)}+z_{2}^{(l-1)})\notag\\
&+\frac{\partial_{t}\beta}{2}\mathrm{sgn}(z_{1}^{(l)})\Big(\nu_{1}(\phi^{(l-1)}+\Phi)-\nu_{1}(\Phi)\Big)(\phi_{1}^{(l-1)}+\phi_{2}^{(l-1)})\notag\\
&+\frac{\beta}{2}\mathrm{sgn}(z_{1}^{(l)})\sum_{j=1}^{2}\frac{\partial\nu_{1}}{\partial\phi_{j}}(z_{j}^{(l-1)}\phi_{1}^{(l-1)}+z_{j}^{(l-1)}\phi_{2}^{(l-1)}).
\end{align*}
Using the sign of $\beta(t,x)$ in~\eqref{a2} and noticing~\eqref{b13}, \eqref{c19}, it is easy to see
$$\frac{\beta}{2}\nu_{1}(\Phi)-\sum_{j=1}^{2}\frac{\partial\nu_{1}}{\partial\phi_{j}}z_{j}^{(l-1)}>0.$$
Then integrating the result along the $1$st characteristic curve $t=t_{1}^{(l)}(y;t,x)$ from $L$ to $x$ and using~\eqref{a2}-\eqref{a3}, \eqref{b13}, \eqref{c19} and~\eqref{c43}, one has
\begin{align}
|z_{1}^{(l)}(t,x)|\leq&|z_{1}^{(l)}(t_{1}^{(l)}(L;t,x),L)|+(\frac{|\beta|}{2}|\nu_{1}(\Phi)|+|\partial_{t}\beta||\nu_{1}(\Phi)|)|\int_{L}^{x}M_{1}\epsilon dy| +C\epsilon^{2}\notag\\
\leq& C_{0}\epsilon, \quad\forall(t,x)\in \mathbb{R}\times[0,L],\label{c45}
\end{align}
where $C_{0}>0$ is a constant independent of $l$.\\
\indent Next we transform equation~\eqref{c39} into the following equation of $F_{1}z_{1}^{(l)}$
\begin{align}
&\partial_{x}(F_{1}z_{1}^{(l)})+\nu_{1}(\phi^{(l-1)}+\Phi)\partial_{t}(F_{1}z_{1}^{(l)})\notag\\
=&\frac{\partial_{t}\beta}{2}F_{1}\nu_{1}(\Phi)\phi_{1}^{(l)}+\frac{\beta}{2}F_{1}\nu_{1}(\Phi)z_{2}^{(l-1)}
+\frac{\partial_{t}\beta}{2}F_{1}\nu_{1}(\Phi)\phi_{2}^{(l-1)}\notag\\
&+\frac{\beta}{2}F_{1}\Big(\nu_{1}(\phi^{(l-1)}+\Phi)-\nu_{1}(\Phi)\Big)(z_{1}^{(l-1)}+z_{2}^{(l-1)})\notag\\
&+\frac{\partial_{t}\beta}{2}F_{1}\Big(\nu_{1}(\phi^{(l-1)}+\Phi)-\nu_{1}(\Phi)\Big)(\phi_{1}^{(l-1)}+\phi_{2}^{(l-1)})\notag\\
&+\frac{\beta}{2}F_{1}\sum_{j=1}^{2}\frac{\partial\nu_{1}}{\partial\phi_{j}}(z_{j}^{(l-1)}\phi_{1}^{(l-1)}+z_{j}^{(l-1)}\phi_{2}^{(l-1)})
-F_{1}\sum_{j=1}^{2}\frac{\partial\nu_{1}}{\partial\phi_{j}}z_{j}^{(l-1)}z_{1}^{(l)}\notag\\
&+\nu_{1}(\phi^{(l-1)}+\Phi)F_{1}z_{1}^{(l)}\int_{x}^{L}\frac{\partial_{t}\beta(t,s)}{2}\nu_{1}(\Phi)ds,\label{c46}
\end{align}
then integrate it along the $1$st characteristic curve $t=t_{1}^{(l)}(y;t,x)$ from $L$ to $x$ to get
\begin{align*}
&F_{1}(t,x)z_{1}^{(l)}(t,x)-F_{1}(t_{1}^{(l)}(L;t,x),L)z_{1}^{(l)}(t_{1}^{(l)}(L;t,x),L)\\
=&\int_{L}^{x}-(\frac{\partial}{\partial y}F_{1}(t_{1}^{(l)}(y;t,x),y))z_{2}^{(l-1)}(t_{1}^{(l)}(y;t,x),y)dy\\
&+\int_{L}^{x}\Big(\frac{\partial_{t}\beta}{2}F_{1}\nu_{1}(\Phi)\phi_{1}^{(l)}
+\frac{\partial_{t}\beta}{2}F_{1}\nu_{1}(\Phi)\phi_{2}^{(l-1)}\\
&+\frac{\beta}{2}F_{1}\big(\nu_{1}(\phi^{(l-1)}+\Phi)-\nu_{1}(\Phi)\big)(z_{1}^{(l-1)}+z_{2}^{(l-1)})\\
&+\frac{\partial_{t}\beta}{2}F_{1}\big(\nu_{1}(\phi^{(l-1)}+\Phi)-\nu_{1}(\Phi)\big)(\phi_{1}^{(l-1)}+\phi_{2}^{(l-1)})\\
&+\frac{\beta}{2}F_{1}\sum_{j=1}^{2}\frac{\partial\nu_{1}}{\partial\phi_{j}}(z_{j}^{(l-1)}\phi_{1}^{(l-1)}+z_{j}^{(l-1)}\phi_{2}^{(l-1)})
-F_{1}\sum_{j=1}^{2}\frac{\partial\nu_{1}}{\partial\phi_{j}}z_{j}^{(l-1)}z_{1}^{(l)}\\
&+\nu_{1}(\phi^{(l-1)}+\Phi)F_{1}z_{1}^{(l)}\int_{y}^{L}\frac{\partial_{t}\beta(\cdot,s)}{2}\nu_{1}(\Phi)ds\Big)(t_{1}^{(l)}(y;t,x),y)dy.
\end{align*}
By~\eqref{a2}-\eqref{a3}, \eqref{b13}, \eqref{c19}, \eqref{c43} and the rough estimate~\eqref{c45}, we get
\begin{align}
|z_{1}^{(l)}(t,x)|&\leq\frac{1}{F_{1}(t,x)}(M_{1}\epsilon-99\epsilon)+\frac{F_{1}(t,x)-1}{F_{1}(t,x)}M_{1}\epsilon+C\epsilon^{2}\notag\\
&\leq M_{1}\epsilon-\frac{99}{F_{1}(t,x)}\epsilon+C\epsilon^{2}\notag\\
&<M_{1}\epsilon.\label{c47}
\end{align}
Similarly, we have
\begin{align}
|z_{2}^{(l)}(t,x)|<M_{1}\epsilon.\label{c48}
\end{align}
By applying the equations~\eqref{c3}, \eqref{c5} and noting~\eqref{a2}, \eqref{b13}, \eqref{c19}, \eqref{c37}-\eqref{c38} and~\eqref{c47}-\eqref{c48}, we gain
\begin{align}
\|\partial_{x}\phi_{i}^{(l)}\|_{C^{0}}&\leq A_{0}M_{1}\epsilon+\beta_{*}A_{0}M_{1}\epsilon+C\epsilon^{2}\notag\\
&\leq C_{1}\epsilon,\label{c49}
\end{align}
where we choose the constant $C_{1}>A_{0}M_{1}+\beta_{*}A_{0}M_{1}$, which is independent of $l$.\\
$\mathbf{Step~3}:~$ $\{\phi_{i}^{(l)}\}(i=1,2)$ is a Cauchy sequence in $C^{0}$.\\
\indent By~\eqref{c7}, we get~\eqref{c15} for $l=1$ from~\eqref{c37}-\eqref{c38} directly. Next we prove~\eqref{c15} for $l\geq2$.
Select $\theta<1$ satisfying
$$ \theta>|\kappa_{1}|,~~\theta>|\kappa_{2}|.$$
At the boundary $x=0$ and $x=L$, by~\eqref{c21}, we have
\begin{align}
\|\phi_{1}^{(l)}(t,L)-\phi_{1}^{(l-1)}(t,L)\|_{C^{0}}&\leq|\kappa_{1}|\|\phi_{2}^{(l-1)}(t,L)-\phi_{2}^{(l-2)}(t,L)\|_{C^{0}}\notag\\
&\leq |\kappa_{1}|M_{1}\epsilon\theta^{l-1},\label{c50}\\
\|\phi_{2}^{(l)}(t,0)-\phi_{2}^{(l-1)}(t,0)\|_{C^{0}}&\leq|\kappa_{2}|\|\phi_{1}^{(l-1)}(t,0)-\phi_{1}^{(l-2)}(t,0)\|_{C^{0}}\notag\\
&\leq |\kappa_{2}|M_{1}\epsilon\theta^{l-1}.\label{c51}
\end{align}
In the domain $D$, from~\eqref{c3}, we get
\begin{align*}
&\partial_{x}(\phi_{1}^{(l)}-\phi_{1}^{(l-1)})+\nu_{1}(\phi^{(l-1)}+\Phi)\partial_{t}(\phi_{1}^{(l)}-\phi_{1}^{(l-1)})\\
=&\frac{\beta}{2}\nu_{1}(\Phi)(\phi_{1}^{(l)}-\phi_{1}^{(l-1)})-\Big(\nu_{1}(\phi^{(l-1)}+\Phi)-\nu_{1}(\phi^{(l-2)}+\Phi)\Big)\partial_{t}\phi_{1}^{(l-1)}\\
&+\frac{\beta}{2}\Big(\nu_{1}(\phi^{(l-1)}+\Phi)-\nu_{1}(\Phi)\Big)(\phi_{1}^{(l-1)}-\phi_{1}^{(l-2)})\\
&+\frac{\beta}{2}\Big(\nu_{1}(\phi^{(l-1)}+\Phi)-\nu_{1}(\phi^{(l-2)}+\Phi)\Big)\phi_{1}^{(l-2)}\\
&+\frac{\beta}{2}\nu_{1}(\phi^{(l-1)}+\Phi)(\phi_{2}^{(l-1)}-\phi_{2}^{(l-2)})\\
&+\frac{\beta}{2}\Big(\nu_{1}(\phi^{(l-1)}+\Phi)-\nu_{1}(\phi^{(l-2)}+\Phi)\Big)\phi_{2}^{(l-2)},
\end{align*}
then we multiply $F_{1}(t,x)$ on both sides of the above equality to gain
\begin{align*}
&\partial_{x}\Big(F_{1}(\phi_{1}^{(l)}-\phi_{1}^{(l-1)})\Big)+\nu_{1}(\phi^{(l-1)}+\Phi)\partial_{t}\Big(F_{1}(\phi_{1}^{(l)}-\phi_{1}^{(l-1)})\Big)\\
=&-F_{1}\Big(\nu_{1}(\phi^{(l-1)}+\Phi)-\nu_{1}(\phi^{(l-2)}+\Phi)\Big)\partial_{t}\phi_{1}^{(l-1)}\\
&+\nu_{1}(\phi^{(l-1)}+\Phi)F_{1}(\phi_{1}^{(l)}-\phi_{1}^{(l-1)})\int_{x}^{L}\frac{\partial_{t}\beta(t,s)}{2}\nu_{1}(\Phi)ds\\
&+\frac{\beta}{2}F_{1}\Big(\nu_{1}(\phi^{(l-1)}+\Phi)-\nu_{1}(\Phi)\Big)(\phi_{1}^{(l-1)}-\phi_{1}^{(l-2)})\\
&+\frac{\beta}{2}F_{1}\Big(\nu_{1}(\phi^{(l-1)}+\Phi)-\nu_{1}(\phi^{(l-2)}+\Phi)\Big)\phi_{1}^{(l-2)}\\
&+\frac{\beta}{2}F_{1}\Big(\nu_{1}(\phi^{(l-1)}+\Phi)-\nu_{1}(\Phi)\Big)(\phi_{2}^{(l-1)}-\phi_{2}^{(l-2)})\\
&+\frac{\beta}{2}F_{1}\nu_{1}(\Phi)(\phi_{2}^{(l-1)}-\phi_{2}^{(l-2)})\\
&+\frac{\beta}{2}F_{1}\Big(\nu_{1}(\phi^{(l-1)}+\Phi)-\nu_{1}(\phi^{(l-2)}+\Phi)\Big)\phi_{2}^{(l-2)},
\end{align*}
and integrate it along the $1$st characteristic curve $t=t_{1}^{(l)}(y;t,x)$ from $L$ to $x$ to get
\begin{align*}
&F_{1}(t,x)(\phi_{1}^{(l)}(t,x)-\phi_{1}^{(l-1)}(t,x))\\
=&F_{1}(t,L)(\phi_{1}^{(l)}(t_{1}^{(l)}(L;t,x),L)-\phi_{1}^{(l-1)}(t_{1}^{(l)}(L;t,x),L))\\
&+\int_{L}^{x}\Big(-F_{1}\big(\nu_{1}(\phi^{(l-1)}+\Phi)-\nu_{1}(\phi^{(l-2)}+\Phi)\big)\partial_{t}\phi_{1}^{(l-1)}\\
&+\frac{\beta}{2}F_{1}\big(\nu_{1}(\phi^{(l-1)}+\Phi)-\nu_{1}(\Phi)\big)(\phi_{1}^{(l-1)}-\phi_{1}^{(l-2)})\\
&+\frac{\beta}{2}F_{1}\big(\nu_{1}(\phi^{(l-1)}+\Phi)-\nu_{1}(\phi^{(l-2)}+\Phi)\big)\phi_{1}^{(l-2)}\\
&+\frac{\beta}{2}F_{1}\big(\nu_{1}(\phi^{(l-1)}+\Phi)-\nu_{1}(\Phi)\big)(\phi_{2}^{(l-1)}-\phi_{2}^{(l-2)})\\
&+\frac{\beta}{2}F_{1}(\nu_{1}(\phi^{(l-1)}+\Phi)-\nu_{1}(\phi^{(l-2)}+\Phi))\phi_{2}^{(l-2)}\\
&+\nu_{1}(\phi^{(l-1)}+\Phi)F_{1}(\phi_{1}^{(l)}-\phi_{1}^{(l-1)})\int_{y}^{L}\frac{\partial_{t}\beta(\cdot,s)}{2}\nu_{1}(\Phi)ds\Big)(t_{1}^{(l)}(y;t,x),y)dy\\
&+\int_{L}^{x}-(\frac{\partial}{\partial y}F_{1}(\cdot,y))(\phi_{2}^{(l-1)}-\phi_{2}^{(l-2)})(t_{1}^{(l)}(y;t,x),y)dy.
\end{align*}
By~\eqref{c19}, \eqref{c21} and~\eqref{c50}, we have
\begin{align}
\|(\phi_{1}^{(l)}-\phi_{1}^{(l-1)})\|_{C^{0}}&\leq\frac{|\kappa_{1}|M_{1}\epsilon\theta^{l-1}}{F_{1}(t,x)}+\frac{F_{1}(t,x)-1}{F_{1}(t,x)}M_{1}\epsilon\theta^{l-1}
+C\epsilon M_{1}\epsilon\theta^{l-1}\notag\\
&\leq M_{1}\epsilon\theta^{l-1}-\frac{1-|\kappa_{1}|}{F_{1}(t,x)}M_{1}\epsilon\theta^{l-1}+C\epsilon M_{1}\epsilon\theta^{l-1}\notag\\
&\leq M_{1}\epsilon\theta^{l}.\label{c52}
\end{align}
Similarly, we have
\begin{align}
\|(\phi_{2}^{(l)}-\phi_{2}^{(l-1)})\|_{C^{0}}\leq M_{1}\epsilon\theta^{l}.\label{c52S}
\end{align}
$\mathbf{Step~4}:~$ The proof of~\eqref{c16} and~\eqref{c17}.\\
Now, we show the modulus of continuity for $z_{i}^{(l)}(i=1,2)$ on the temporal direction~\eqref{c16}, which is very important to prove~\eqref{c17}.\\
\indent For $\delta\in(0,1)$, we choose
\begin{align}
\Omega(\delta)=\frac{24}{1-\kappa}[A_{0}+1](\sqrt{\epsilon}\delta+\varpi(\delta|\phi'_{1_{b}})+\varpi(\delta|\phi'_{2_{b}})+\varpi(\delta|\partial_{t}\beta)).\label{c53}
\end{align}
Since $\varpi(\delta|\phi'_{i_{b}})(i=1,2)$ and $\varpi(\delta|\partial_{t}\beta)$ are monotonically increasing, bounded and continuous concave functions of $\delta$ and $\mathop{\lim}\limits_{\delta\rightarrow0^{+}}\varpi(\delta|\phi'_{i_{b}})=0$, then $\Omega(\delta)$ is also such a function and
$$\mathop{\lim}\limits_{\delta\rightarrow0^{+}}\Omega(\delta)=0.$$
At the boundary $x=L$, for any given $t_{1},t_{2}\in\mathbb{R}$ with $|t_{1}-t_{2}|\leq\delta\ll1$, one has
$$
|z_{1}^{(l)}(t_{1},L)-z_{1}^{(l)}(t_{2},L)|\leq|\phi'_{1_{br}}(t_{1})-\phi'_{1_{br}}(t_{2})|+|\kappa_{1}||z_{2_{r}}^{(l-1)}(t_{1},L)-z_{2_{r}}^{(l-1)}(t_{2},L)|.
$$
Thus, by~\eqref{c22} and~\eqref{c53}, we get
\begin{align}
\varpi(\delta|z_{1}^{(l)}(\cdot,L))&\leq \varpi(\delta|\phi'_{1_{b}})+\kappa\varpi(\delta|z_{2}^{(l-1)}(\cdot,L))\notag\\
&<\frac{1-\kappa}{24[A_{0}+1]}\Omega(\delta)+\frac{\kappa}{8[A_{0}+1]}\Omega(\delta)\notag\\
&=(\frac{1}{24[A_{0}+1]}+\frac{\kappa}{12[A_{0}+1]})\Omega(\delta)\notag\\
&<\frac{1}{8[A_{0}+1]}\Omega(\delta).\label{c54}
\end{align}
In the domain $D$, for any $x\in[0,L]$ and $t_{1},t_{2}\in\mathbb{R}$ with $|t_{1}-t_{2}|\leq\delta$, by the definition of the characteristic curve, one has
$$
t_{1}^{(l)}(y;t_{*},x)=\int_{x}^{y}\nu_{1}(\phi^{(l-1)}(t_{1}^{(l)}(\tilde{y};t_{*},x),\tilde{y})+\Phi)d\tilde{y}+t_{*}.
$$
Thus,
\begin{align*}
&|t_{1}^{(l)}(y;t_{1},x)-t_{1}^{(l)}(y;t_{2},x)|\\
\leq&|t_{1}-t_{2}|+\int_{x}^{y}|\nu_{1}(\phi^{(l-1)}(t_{1}^{(l)}(\tilde{y};t_{1},x),\tilde{y})+\Phi)-\nu_{1}(\phi^{(l-1)}(t_{1}^{(l)}(\tilde{y};t_{2},x),\tilde{y})+\Phi)|d\tilde{y}\\
\leq&|t_{1}-t_{2}|+\int_{x}^{y}\sum_{j=1}^{2}|\frac{\partial\nu_{1}}{\partial\phi_{j}}|\|z_{j}^{(l-1)}\|_{C^{0}}|t_{1}^{(l)}(\tilde{y};t_{1},x)-t_{1}^{(l)}(\tilde{y};t_{2},x)|d\tilde{y},
\end{align*}
then by the Gronwall's inequality and~\eqref{c19}, we have
\begin{align}
|t_{1}^{(l)}(y;t_{1},x)-t_{1}^{(l)}(y;t_{2},x)|\leq e^{C\epsilon}|t_{1}-t_{2}|\leq(1+\sqrt{\epsilon})|t_{1}-t_{2}|.\label{c55}
\end{align}
Noticing the concavity of $\Omega(\delta)$, we have
$$\frac{1}{1+\sqrt{\epsilon}}\Omega((1+\sqrt{\epsilon})\delta)+\frac{\sqrt{\epsilon}}{1+\sqrt{\epsilon}}\Omega(0)\leq\Omega(\delta),$$
then
$$\Omega((1+\sqrt{\epsilon})\delta)\leq(1+\sqrt{\epsilon})\Omega(\delta).$$
Thus, by~\eqref{c22}, we get
\begin{align}
&|z_{i}^{(l-1)}(t_{1}^{(l)}(y;t_{1},x),y)-z_{i}^{(l-1)}(t_{1}^{(l)}(y;t_{2},x),y)|\notag\\
\leq &\frac{1}{8[A_{0}+1]}\Omega((1+\sqrt{\epsilon})\delta)\notag\\
\leq&\frac{1}{8[A_{0}+1]}(1+\sqrt{\epsilon})\Omega(\delta).\label{c56}
\end{align}
Integrating~\eqref{c39} along $t=t_{1}^{(l)}(y;t_{1},x)$ and $t=t_{1}^{(l)}(y;t_{2},x)$ respectively and then subtracting the two results, we get
\begin{align*}
&z_{1}^{(l)}(t_{1},x)-z_{1}^{(l)}(t_{2},x)\\
=&z_{1}^{(l)}(t_{1}^{(l)}(L;t_{1},x),L)-z_{1}^{(l)}(t_{1}^{(l)}(L;t_{2},x),L)\\
&+\int_{L}^{x}\frac{\beta}{2}\nu_{1}(\Phi)z_{1}^{(l)}\Big|_{(t_{1}^{(l)}(y;t_{2},x),y)}^{(t_{1}^{(l)}(y;t_{1},x),y)}dy
+\int_{L}^{x}\frac{\beta}{2}\nu_{1}(\Phi)z_{2}^{(l-1)}\Big|_{(t_{1}^{(l)}(y;t_{2},x),y)}^{(t_{1}^{(l)}(y;t_{1},x),y)}dy\\
&+\int_{L}^{x}\frac{\partial_{t}\beta(t_{1}^{(l)}(y;t_{1},x),y)-\partial_{t}\beta(t_{1}^{(l)}(y;t_{2},x),y)}{2}\nu_{1}(\Phi)\phi_{1}^{(l)}\Big|_{(t_{1}^{(l)}(y;t_{1},x),y)}dy\\
&+\int_{L}^{x}\frac{\partial_{t}\beta(t_{1}^{(l)}(y;t_{2},x),y)}{2}\nu_{1}(\Phi)\phi_{1}^{(l)}\Big|_{(t_{1}^{(l)}(y;t_{2},x),y)}^{(t_{1}^{(l)}(y;t_{1},x),y)}dy\\
&+\int_{L}^{x}\frac{\partial_{t}\beta(t_{1}^{(l)}(y;t_{1},x),y)-\partial_{t}\beta(t_{1}^{(l)}(y;t_{2},x),y)}{2}
\Big(\nu_{1}(\phi^{(l-1)}(t_{1}^{(l)}(y;t_{1},x),y)+\Phi)\\
&-\nu_{1}(\Phi)\Big)(\phi_{1}^{(l-1)}+\phi_{2}^{(l-1)})\Big|_{(t_{1}^{(l)}(y;t_{1},x),y)}dy\\
&+\int_{L}^{x}\frac{\partial_{t}\beta(t_{1}^{(l)}(y;t_{2},x),y)}{2}\Big(\nu_{1}(\phi^{(l-1)}(t_{1}^{(l)}(y;t_{1},x),y)+\Phi)\\
&-\nu_{1}(\phi^{(l-1)}(t_{1}^{(l)}(y;t_{2},x),y)+\Phi)\Big)(\phi_{1}^{(l-1)}+\phi_{2}^{(l-1)})\Big|_{(t_{1}^{(l)}(y;t_{1},x),y)}dy\\
&+\int_{L}^{x}\frac{\partial_{t}\beta(t_{1}^{(l)}(y;t_{2},x),y)}{2}\Big(\nu_{1}(\phi^{(l-1)}(t_{1}^{(l)}(y;t_{2},x),y)+\Phi)-\nu_{1}(\Phi)\Big)\\
&(\phi_{1}^{(l-1)}+\phi_{2}^{(l-1)})\Big|_{(t_{1}^{(l)}(y;t_{2},x),y)}^{(t_{1}^{(l)}(y;t_{1},x),y)}dy\\
&+\int_{L}^{x}\frac{\partial_{t}\beta(t_{1}^{(l)}(y;t_{1},x),y)-\partial_{t}\beta(t_{1}^{(l)}(y;t_{2},x),y)}{2}\nu_{1}(\Phi)\phi_{2}^{(l-1)}\Big|_{(t_{1}^{(l)}(y;t_{1},x),y)}dy
\end{align*}
\begin{align*}
&+\int_{L}^{x}\frac{\partial_{t}\beta(t_{1}^{(l)}(y;t_{2},x),y)}{2}\nu_{1}(\Phi)\phi_{2}^{(l-1)}\Big|_{(t_{1}^{(l)}(y;t_{2},x),y)}^{(t_{1}^{(l)}(y;t_{1},x),y)}dy\\
&+\int_{L}^{x}\frac{\beta}{2}\Big(\nu_{1}(\phi^{(l-1)}(t_{1}^{(l)}(y;t_{1},x),y)+\Phi)-\nu_{1}(\phi^{(l-1)}(t_{1}^{(l)}(y;t_{2},x),y)+\Phi)\Big)\\
&(z_{1}^{(l-1)}+z_{2}^{(l-1)})\Big|_{(t_{1}^{(l)}(y;t_{1},x),y)}dy\\
&+\int_{L}^{x}\frac{\beta}{2}\Big(\nu_{1}(\phi^{(l-1)}(t_{1}^{(l)}(y;t_{2},x),y)+\Phi)-\nu_{1}(\Phi)\Big)(z_{1}^{(l-1)}+z_{2}^{(l-1)})
\Big|_{(t_{1}^{(l)}(y;t_{2},x),y)}^{(t_{1}^{(l)}(y;t_{1},x),y)}dy\\
&+\int_{L}^{x}\frac{\beta}{2}\sum_{j=1}^{2}\Big(\frac{\partial\nu_{1}}{\partial\phi_{j}}(\phi^{(l-1)}(t_{1}^{(l)}(y;t_{1},x),y)+\Phi)
-\frac{\partial\nu_{1}}{\partial\phi_{j}}(\phi^{(l-1)}(t_{1}^{(l)}(y;t_{2},x),y)+\Phi)\Big)\\
&(z_{j}^{(l-1)}\phi_{1}^{(l-1)}+z_{j}^{(l-1)}\phi_{2}^{(l-1)})\Big|_{(t_{1}^{(l)}(y;t_{1},x),y)}dy\\
&+\int_{L}^{x}\frac{\beta}{2}\sum_{j=1}^{2}\frac{\partial\nu_{1}}{\partial\phi_{j}}(\phi^{(l-1)}(t_{1}^{(l)}(y;t_{2},x),y)+\Phi)z_{j}^{(l-1)}
\Big|_{(t_{1}^{(l)}(y;t_{2},x),y)}^{(t_{1}^{(l)}(y;t_{1},x),y)}\\
&(\phi_{1}^{(l-1)}+\phi_{2}^{(l-1)})\Big|_{(t_{1}^{(l)}(y;t_{1},x),y)}dy\\
&+\int_{L}^{x}\frac{\beta}{2}\sum_{j=1}^{2}\frac{\partial\nu_{1}}{\partial\phi_{j}}(\phi^{(l-1)}(t_{1}^{(l)}(y;t_{2},x),y)+\Phi)z_{j}^{(l-1)}(t_{1}^{(l)}(y;t_{2},x),y)\\
&(\phi_{1}^{(l-1)}+\phi_{2}^{(l-1)})\Big|_{(t_{1}^{(l)}(y;t_{2},x),y)}^{(t_{1}^{(l)}(y;t_{1},x),y)}dy\\
&+\int_{L}^{x}\sum_{j=1}^{2}\Big(\frac{\partial\nu_{1}}{\partial\phi_{j}}(\phi^{(l-1)}(t_{1}^{(l)}(y;t_{1},x),y)+\Phi)
-\frac{\partial\nu_{1}}{\partial\phi_{j}}(\phi^{(l-1)}(t_{1}^{(l)}(y;t_{2},x),y)+\Phi)\Big)\\
&(z_{j}^{(l-1)}z_{1}^{(l)})\Big|_{(t_{1}^{(l)}(y;t_{1},x),y)}dy\\
&+\int_{L}^{x}\sum_{j=1}^{2}\frac{\partial\nu_{1}}{\partial\phi_{j}}(\phi^{(l-1)}(t_{1}^{(l)}(y;t_{2},x),y)+\Phi)z_{j}^{(l-1)}
\Big|_{(t_{1}^{(l)}(y;t_{2},x),y)}^{(t_{1}^{(l)}(y;t_{1},x),y)}z_{1}^{(l)}(t_{1}^{(l)}(y;t_{1},x),y)dy\\
&+\int_{L}^{x}\sum_{j=1}^{2}\frac{\partial\nu_{1}}{\partial\phi_{j}}(\phi^{(l-1)}(t_{1}^{(l)}(y;t_{2},x),y)+\Phi)z_{j}^{(l-1)}(t_{1}^{(l)}(y;t_{2},x),y)z_{1}^{(l)}
\Big|_{(t_{1}^{(l)}(y;t_{2},x),y)}^{(t_{1}^{(l)}(y;t_{1},x),y)}dy.
\end{align*}
By~\eqref{a2}-\eqref{a3}, \eqref{b13}, \eqref{c19}, \eqref{c54}-\eqref{c56} and Gronwall's inequality, we have
\begin{align}
|z_{1}^{(l)}(t_{1},x)-z_{1}^{(l)}(t_{2},x)|\leq\frac{1}{8[A_{0}+1]}C_{2}\Omega(\delta),\label{c57}
\end{align}
where $C_{2}>0$ is a constant independent of $l$.\\
\indent Using the integral expression~\eqref{c46} of $F_{1}(t,x)z_{1}^{(l)}(t,x)$ and then by~\eqref{a2}-\eqref{a3}, \eqref{b13}, \eqref{c19}, and~\eqref{c54}-\eqref{c57},we have
\begin{align}
&|z_{1}^{(l)}(t_{1},x)-z_{1}^{(l)}(t_{2},x)|\notag\\
\leq&\frac{1}{F_{1}(t,x)}(\frac{1}{24[A_{0}+1]}+\frac{1}{12[A_{0}+1]}\kappa)(1+\sqrt{\epsilon})\Omega(\delta)\notag\\
&+\frac{F_{1}(t,x)-1}{F_{1}(t,x)}\frac{1}{8[A_{0}+1]}(1+\sqrt{\epsilon})\Omega(\delta)
+C\epsilon^{2}(1+\sqrt{\epsilon})|t_{1}-t_{2}|\notag\\
&+C\epsilon\frac{1+\sqrt{\epsilon}}{8[A_{0}+1]}\Omega(\delta)+\frac{1}{8[A_{0}+1]}C\epsilon\Omega(\delta)\notag\\
\leq&\frac{1}{8[A_{0}+1]}\Omega(\delta)-(\frac{1-\kappa}{12[A_{0}+1]}-\sqrt{\epsilon})\Omega(\delta)\notag\\
<&\frac{1}{8[A_{0}+1]}\Omega(\delta).\label{c58}
\end{align}
Finally, we prove~\eqref{c17}. We first consider the special case that two given points $(t_{1},x_{1})$ and $(t_{2},x_{2})$ with $|t_{1}-t_{2}|\leq\delta,|x_{1}-x_{2}|\leq\delta$ locate on the same characteristic curve $t=t_{1}^{(l)}(x;t_{0},x_{0})$, namely, $t_{2}=t_{1}^{(l)}(x_{2};t_{1},x_{1})$. Using the similar method of~\eqref{c47}, we can get
\begin{align}
|z_{1}^{(l)}(t_{1},x_{1})-z_{1}^{(l)}(t_{2},x_{2})|\leq C\epsilon\delta\leq\frac{1}{12}\Omega(\delta).\label{c59}
\end{align}
Then, for general two points $(t_{1},x_{1})$ and $(t_{2},x_{2})$ with $|t_{1}-t_{2}|\leq\delta,|x_{1}-x_{2}|\leq\delta$, we can choose a point $(t_{3},x_{1})$ locating on the $1$st characteristic curve passing through $(t_{2},x_{2})$, namely, $t_{3}=t_{1}^{(l)}(x_{1};t_{2},x_{2})$.\\
\indent By definition~\eqref{c24} and~\eqref{b13}, we have
$$|t_{3}-t_{2}|\leq|\nu_{1}||x_{1}-x_{2}|\leq A_{0}\delta,$$
and thus
$$|t_{3}-t_{1}|\leq|t_{3}-t_{2}|+|t_{2}-t_{1}|\leq(A_{0}+1)\delta.$$
Now we combine estimates~\eqref{c58}-\eqref{c59} to get
\begin{align}
&|z_{1}^{(l)}(t_{1},x_{1})-z_{1}^{(l)}(t_{2},x_{2})|\notag\\
\leq&|z_{1}^{(l)}(t_{1},x_{1})-z_{1}^{(l)}(\frac{[A_{0}+1]t_{1}+t_{3}}{A_{0}+1},x_{1})|\notag\\
&+|z_{1}^{(l)}(\frac{[A_{0}+1]t_{1}+t_{3}}{A_{0}+1},x_{1})
-z_{1}^{(l)}(\frac{([A_{0}+1]-1)t_{1}+2t_{3}}{A_{0}+1},x_{1})|\notag\\
&+\ldots+|z_{1}^{(l)}(\frac{t_{1}+[A_{0}+1]t_{3}}{A_{0}+1},x_{1})-z_{1}^{(l)}(t_{3},x_{1})|+|z_{1}^{(l)}(t_{3},x_{1})-z_{1}^{(l)}(t_{2},x_{2})|\notag\\
\leq&\frac{[A_{0}+1]+1}{8[A_{0}+1]}\Omega(\delta)+\frac{1}{12}\Omega(\delta)\notag\\
\leq&\frac{1}{3}\Omega(\delta).\label{c60}
\end{align}
The combination of~\eqref{c59} and~\eqref{c60} leads to
\begin{align}
\varpi(\delta|z_{1}^{(l)})\leq\frac{1}{3}\Omega(\delta).\label{c61}
\end{align}
In a similar way, we obtain
\begin{align}
\varpi(\delta|z_{2}^{(l)})\leq\frac{1}{3}\Omega(\delta).\label{c62}
\end{align}
By the aid of equations~\eqref{c3}, \eqref{c5} and by~\eqref{a2}, \eqref{b13}, \eqref{c19}, \eqref{c23}, \eqref{c53} and~\eqref{c61}-\eqref{c62}, we have
\begin{align}
\varpi(\delta|\partial_{x}\phi_{i}^{(l)})&\leq A_{0}\frac{1}{3}\Omega(\delta)+C\epsilon^{2}\delta+C\epsilon\delta\notag\\
&\leq A_{0}\frac{1}{2}\Omega(\delta),\quad i=1,2.\label{c63}
\end{align}
Thus, \eqref{c61}-\eqref{c63} indicate~\eqref{c17}.
\end{proof}
\indent With the help of~\proref{p1} and the similar arguments as in \cite{Qu}, the proof of Theorem 2.1 could be presented, here we omit the details.
\section{Stability of the Time-periodic Solution}\label{s4}
\indent\indent In this section, we give the proof of~\theref{t2} to consider the stability of the time-periodic solution obtained in Theorem 2.1. For the sake of proving the existence of the classical solutions $\phi=\phi(t,x)$, we only need to prove the following ~\lemref{l1} on the basis of the existence and uniqueness of local $C^{1}$ solution for the mixed initial-boundary value problem for quasilinear hyperbolic system(cf. Chapter $4$ in \cite{Yuw}).
Using the method in~\cite{Li}, we can give the proof of~\lemref{l1}. Here we omit the details.
\begin{lemma}\label{l1}
 There exists a small constant $\varepsilon_{6}>0$, for any given $\varepsilon\in(0,\varepsilon_{6})$, there exists $\sigma=\sigma(\varepsilon)>0$ such that if
 \begin{align*}
 &\|\phi_{i_{b}}\|_{C^{1}(\mathbb{R}_{+})}\leq\sigma,\quad i=1,2,\\
 &\|\phi_{0}\|_{C^{1}[0,L]}\leq\sigma,
 \end{align*}
 then the $C^{1}$ solution $\phi=\phi(t,x)$ to the initial-boundary value problem~\eqref{b7}-\eqref{b10} satisfies
\begin{align}
\|\phi\|_{C^{1}(D)}\leq \varepsilon.\label{d1}
\end{align}
\end{lemma}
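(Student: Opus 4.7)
The plan is to transplant the linearized-iteration machinery from Section~\ref{s3} directly to the genuinely nonlinear problem~\eqref{b7}--\eqref{b10}, replacing the iteration indices $(l), (l-1)$ by the single unknown $\phi$ and exploiting the same two structural features that drove \proref{p1}: the strictly dissipative boundary relations $|\kappa_1|,|\kappa_2|<1$ and the integrating factors $F_1,F_2\geq 1$ built from the damping $\beta$. First, by the standard local existence theory (Chapter~4 in~\cite{Yuw}) with initial datum of $C^1$--norm $\leq\sigma$ and boundary data of $C^1$--norm $\leq\sigma$, a unique $C^1$ solution $\phi$ exists on some short time interval. Define
$$T^*=\sup\{T>0 : \phi\in C^1([0,T]\times[0,L])\text{ with }\|\phi\|_{C^1}\leq\varepsilon\}.$$
The goal is to show $T^*=+\infty$ by proving that on $[0,T^*)$ one actually has the strictly better bound $\|\phi\|_{C^1}\leq\varepsilon/2$, say, whenever $\sigma$ is chosen small enough in terms of $\varepsilon$; then a standard continuation argument yields $T^*=+\infty$ and \eqref{d1}.

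Second, to establish this continuation estimate I would redo the three-step scheme of \proref{p1} (\textbf{Step 2}, \textbf{Step 3}, \textbf{Step 4}) without the periodicity bookkeeping. Specifically, I would rewrite the equations in the form~\eqref{c31} and~\eqref{c33} for $F_1\phi_1$ and $F_2\phi_2$, integrate along the $1$st (resp.\ $2$nd) characteristic up to the boundary $x=L$ (resp.\ $x=0$), and use the boundary relations~\eqref{b9}--\eqref{b10} together with $|\kappa_i|<1$ to close a $C^0$ estimate of the form
$$\|\phi_i\|_{C^0}\leq\kappa\|\phi_i\|_{C^0}+C(\sigma+\varepsilon^2),$$
which delivers $\|\phi\|_{C^0}\leq C(1-\kappa)^{-1}\sigma$ provided $\varepsilon$ is small. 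The only novelty versus \proref{p1} is that characteristics from an interior point may reach $t=0$ before hitting a spatial boundary, in which case the initial datum $\phi_0$ (of norm $\leq\sigma$) supplies the boundary term; the estimate still closes with the same $\kappa$-contraction because the multiplicative factor is $1/F_i(t,x)\leq 1$.

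Third, I would differentiate the equations~\eqref{b7} in $t$ to obtain the analogues of~\eqref{c39} and~\eqref{c41} for $z_i=\partial_t\phi_i$, apply the same ``estimation by twice integration'' trick (first a rough bound $|z_i|\leq C_0\sigma$, then a refined bound via the $F_i$-multiplied form) and close the same boundary-dissipation inequality to deduce $\|\partial_t\phi\|_{C^0}\leq C(1-\kappa)^{-1}\sigma$. The $\partial_x\phi$ bound then follows algebraically from the original equations~\eqref{b7} using $\|\phi\|_{C^0}+\|\partial_t\phi\|_{C^0}\leq C\sigma$ and $\|\beta\|_{L^\infty}\leq|\beta_*|$, giving $\|\partial_x\phi\|_{C^0}\leq C\sigma$. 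Choosing $\sigma=\sigma(\varepsilon)$ so that $C(1-\kappa)^{-1}\sigma<\varepsilon/2$ closes the continuation argument.

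The main obstacle, as in Section~\ref{s3}, will be to keep the quasilinear coupling terms ($\partial\nu_i/\partial\phi_j\cdot z_j\cdot z_i$ in the $z_i$-equation) from spoiling the boundary-dissipation contraction. This is precisely where the rough estimate $|z_i|\leq C_0\sigma$ is used to absorb these terms into an $O(\sigma^2)$ correction that is dominated by the $(1-\kappa)/F_i$ gain. The smallness $\varepsilon_6$ is chosen so that $C\varepsilon_6<1-\kappa$, so this absorption succeeds uniformly in time, and periodicity plays no role in the proof of the a priori estimate—only the sign conditions $\beta\leq 0$, $\nu_1<0<\nu_2$, and $|\kappa_i|<1$ matter. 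Since this is a faithful transcription of arguments the authors have already executed in detail for the linearized iterates and also appears in~\cite{Li}, omitting the details is natural.
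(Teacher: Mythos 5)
Your proposal is correct and follows exactly the route the paper intends: local existence from~\cite{Yuw}, then a continuation argument driven by a uniform a priori $C^1$ estimate obtained via the dissipative boundary conditions ($|\kappa_i|<1$) and the integrating factors $F_i\geq 1$, which is precisely the method of~\cite{Li} that the paper cites and declines to reproduce. The only slight imprecision is your remark that characteristics hitting $t=0$ contribute with factor $1/F_i(t,x)\leq 1$; the actual factor is $F_i(0,x_0)/F_i(t,x)\leq M_0$, but since this multiplies the initial datum it merely enlarges the $O(\sigma)$ source term rather than the boundary contraction coefficient, so the conclusion is unaffected.
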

Now, we prove~\eqref{b17} inductively. For any $t_{*}>0$ and $N\in\mathbb{N}$, we prove
\begin{align}
\mathop{\max}\limits_{i=1,2}\|\phi_{i}(t,\cdot)-\phi_{i}^{(T_{*})}(t,\cdot)\|_{C^{0}}\leq C_{S}\epsilon\xi^{N+1},\quad\forall t\in[t_{*}+T_{0},t_{*}+2T_{0}],\label{d2}
\end{align}
under the hypothesis
\begin{align}
\mathop{\max}\limits_{i=1,2}\|\phi_{i}(t,\cdot)-\phi_{i}^{(T_{*})}(t,\cdot)\|_{C^{0}}\leq C_{S}\epsilon\xi^{N},\quad\forall t\in[t_{*},t_{*}+T_{0}], \label{d3}
\end{align}
where $\xi\in(0,1)$ is a constant to be determined later and $\phi_{i}^{(T_{*})}(t,x), i=1,2$ is the time-periodic solution obtained in~\theref{t1}. Let
$$
\theta(t)=\mathop{\max}\limits_{1\leq i\leq2}\mathop{\sup}\limits_{x\in[0,L]}|\phi_{i}(t,x)-\phi_{i}^{(T_{*})}(t,x)|.
$$
Obviously, $\theta(t)$ is continuous and
$$
\theta(t_{*}+T_{0})\leq C_{S}\varepsilon\xi^{N}
$$
follows~\eqref{d3}.
Then it's just necessary to prove
\begin{align}
\theta(t)\leq C_{S}\epsilon\xi^{N+1},\quad\forall t\in[t_{*}+T_{0},\tau]\label{d4}
\end{align}
under the assumption
\begin{align}
\theta(t)\leq C_{S}\epsilon\xi^{N},\quad\forall t\in[t_{*},\tau]\label{d5}
\end{align}
for any $\tau\in[t_{*}+T_{0},t_{*}+2T_{0}]$.\\
\indent At the boundary $x=L$, one has
$$
\phi_{1}(t,L)-\phi_{1}^{(T_{*})}(t,L)=\kappa_{1}(\phi_{2}(t,L)-\phi_{2}^{(T_{*})}(t,L)),
$$
then from~\eqref{d5}, we have
\begin{align}
|\phi_{1}(t,L)-\phi_{1}^{(T_{*})}(t,L)|\leq|\kappa_{1}|C_{S}\epsilon\xi^{N}.\label{d6}
\end{align}
Similarly, at the boundary $x=0$, we get
\begin{align}
|\phi_{2}(t,0)-\phi_{2}^{(T_{*})}(t,0)|\leq|\kappa_{2}|C_{S}\epsilon\xi^{N}.\label{d7}
\end{align}
\indent As for the interior estimates, we have
\begin{align}
\partial_{x}\phi_{1}^{(T_{*})}+\nu_{1}(\phi^{(T_{*})}+\Phi)\partial_{t}\phi_{1}^{(T_{*})}=&\frac{\beta(t,x)}{2}\nu_{1}(\Phi)\phi_{1}^{(T_{*})}
+\frac{\beta(t,x)}{2}(\nu_{1}(\phi^{(T_{*})}+\Phi)\notag\\
&-\nu_{1}(\Phi))\phi_{1}^{(T_{*})}+\frac{\beta(t,x)}{2}\nu_{1}(\phi^{(T_{*})}+\Phi)\phi_{2}^{(T_{*})}.\label{d8}
\end{align}
Then by~\eqref{c1} and~\eqref{d8}, we get
\begin{align}
&\partial_{x}(\phi_{1}-\phi_{1}^{(T_{*})})+\nu_{1}(\phi+\Phi)\partial_{t}(\phi_{1}-\phi_{1}^{(T_{*})})\notag\\
=&-\Big(\nu_{1}(\phi+\Phi)-\nu_{1}(\phi^{(T_{*})}+\Phi)\Big)\partial_{t}\phi_{1}^{(T_{*})}+\frac{\beta(t,x)}{2}\nu_{1}(\Phi)(\phi_{1}-\phi_{1}^{(T_{*})})\notag\\
&+\frac{\beta(t,x)}{2}\Big(\nu_{1}(\phi+\Phi)-\nu_{1}(\Phi)\Big)\Big((\phi_{1}+\phi_{2})-(\phi_{1}^{(T_{*})}+\phi_{2}^{(T_{*})})\Big)\notag\\
&+\frac{\beta(t,x)}{2}\Big(\nu_{1}(\phi+\Phi)-\nu_{1}(\phi^{(T_{*})}+\Phi)\Big)(\phi_{1}^{(T_{*})}+\phi_{2}^{(T_{*})})\notag\\
&+\frac{\beta(t,x)}{2}\nu_{1}(\Phi)(\phi_{2}-\phi_{2}^{(T_{*})}).\label{d10}
\end{align}
\indent Multiplying $F_{1}(t,x)$ on both sides of~\eqref{d10}, we gain
\begin{align*}
&\partial_{x}\Big(F_{1}(\phi_{1}-\phi_{1}^{(T_{*})})\Big)+\nu_{1}(\phi+\Phi)\partial_{t}\Big(F_{1}(\phi_{1}-\phi_{1}^{(T_{*})})\Big)\\
=&-F_{1}\Big(\nu_{1}(\phi+\Phi)-\nu_{1}(\phi^{(T_{*})}+\Phi)\Big)\partial_{t}\phi_{1}^{(T_{*})}\\
&+\nu_{1}(\phi+\Phi)F_{1}(\phi_{1}-\phi_{1}^{(T_{*})})\int_{x}^{L}\frac{\partial_{t}\beta(t,s)}{2}\nu_{1}(\Phi)ds\\
&+\frac{\beta(t,x)}{2}F_{1}\Big(\nu_{1}(\phi+\Phi)-\nu_{1}(\Phi)\Big)\Big((\phi_{1}+\phi_{2})-(\phi_{1}^{(T_{*})}+\phi_{2}^{(T_{*})})\Big)\\
&+\frac{\beta(t,x)}{2}F_{1}\Big(\nu_{1}(\phi+\Phi)-\nu_{1}(\phi^{(T_{*})}+\Phi)\Big)(\phi_{1}^{(T_{*})}+\phi_{2}^{(T_{*})})\\
&+\frac{\beta(t,x)}{2}F_{1}\nu_{1}(\Phi)(\phi_{2}-\phi_{2}^{(T_{*})}),
\end{align*}
then integrate the result along the $1$st characteristic curve $t=t_{1}(x;\hat{t},\hat{x})$ defined by
\begin{align}
\left\{
\begin{aligned}
&\frac{dt_{1}}{dx}(x;\hat{t},\hat{x})=\nu_{1}(\phi(t_{1}(x;\hat{t},\hat{x}),x)+\Phi),\\
&t_{1}(\hat{x};\hat{t},\hat{x})=\hat{t}.
\end{aligned}\right.\label{d11}
\end{align}
Noting $T_{0}=L\mathop{\max}\limits_{i=1,2}\mathop{\sup}\limits_{\phi\in \Psi}|\nu_{i}(\phi+\Phi)|$, for each points $(\hat{t},\hat{x})\in[t_{*}+T_{0},\tau]\times[0,L]$, the backward curve $t=t_{1}(x;\hat{t},\hat{x})$ will intersect the boundary in a time interval shorter than $T_{0}$, namely,
$$t_{1}(L;\hat{t},\hat{x})\in[\hat{t}-T_{0},\hat{t}]\subseteq[t_{*},\tau],\quad \forall(\hat{t},\hat{x})\in[t_{*}+T_{0},\tau]\times[0,L],$$
and thus we can use estimates~\eqref{d6}-\eqref{d7} on the boundary.\\
\indent Using~\eqref{a2}, \eqref{b16} and~\eqref{d5}-\eqref{d6}, we get
\begin{align*}
&|\phi_{1}(\hat{t},\hat{x})-\phi_{1}^{(T_{*})}(\hat{t},\hat{x})|\\
\leq&\frac{|\kappa_{1}|}{F_{1}(\hat{t},\hat{x})}C_{S}\epsilon\xi^{N}+C\epsilon C_{S}\epsilon\xi^{N}+\frac{F_{1}(\hat{t},\hat{x})-1}{F_{1}(\hat{t},\hat{x})}C_{S}\epsilon\xi^{N}\\
\leq& C_{S}\epsilon\xi^{N+1}.
\end{align*}
Here we choose a constant $0<\xi<1$ satisfying
$$ \frac{|\kappa_{1}|}{F_{1}(\hat{t},\hat{x})}+C\epsilon+ \frac{F_{1}(\hat{t},\hat{x})-1}{F_{1}(\hat{t},\hat{x})}\leq\xi.$$
Similarly, we get
\begin{align*}
&|\phi_{2}(\hat{t},\hat{x})-\phi_{2}^{(T_{*})}(\hat{t},\hat{x})|\leq C_{S}\epsilon\xi^{N+1}.
\end{align*}
Thus, we have
$$\theta(\hat{t})\leq C_{S}\epsilon\xi^{N+1}.$$
Since $\hat{t}\in[t_{*}+T_{0},\tau]$ is arbitrary, we get~\eqref{d4}.
\section{Regularity of the Time-periodic Solution}\label{s5}
\indent\indent In this section, we will prove higher regularity of the time-periodic solutions, provided that all boundary functions $m_{i_{b}}(t)(i=1,2)$ possess higher regularity.\\
\indent In order to get the regularity of $\phi^{(T_{*})}$, we use the iteration scheme~\eqref{c3}-\eqref{c6} introduced in~\secref{s3} and prove the following proposition.
\begin{proposition}\label{p2}
For the iteration scheme~\eqref{c3}-\eqref{c6}, assuming that~\eqref{b18} holds, then exists a large enough constant~$C_{R}>0$, such that for any given $l\in \mathbb{N}_{+}$, we have
\begin{align}
&\|\partial_{t}^{2}\phi_{i}^{(l)}\|_{L^{\infty}}\leq C_{R},\label{e1}\\
&\|\partial_{t}\partial_{x}\phi_{i}^{(l)}\|_{L^{\infty}}\leq A_{0}C_{R},\label{e2}\\
&\|\partial_{x}^{2}\phi_{i}^{(l)}\|_{L^{\infty}}\leq A_{0}^{2}C_{R}\label{e3}
\end{align}
under the hypothesis
\begin{align}
&\|\partial_{t}^{2}\phi_{i}^{(l-1)}\|_{L^{\infty}}\leq C_{R}<+\infty,\label{e4}\\
&\|\partial_{t}\partial_{x}\phi_{i}^{(l-1)}\|_{L^{\infty}}\leq A_{0}C_{R},\label{e5}\\
&\|\partial_{x}^{2}\phi_{i}^{(l-1)}\|_{L^{\infty}}\leq A_{0}^{2}C_{R}.\label{e6}
\end{align}
\end{proposition}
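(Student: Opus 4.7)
The plan is to mimic the strategy of \proref{p1} at the level of second derivatives, closing an induction on $l$ by again using the integrating factors $F_1,F_2$ to absorb the boundary contribution. The hypotheses \eqref{e4}--\eqref{e6} together with the already-established first-order bounds \eqref{c13}--\eqref{c14} (applied to both $\phi^{(l-1)}$ and $\phi^{(l)}$) will let me treat all source terms arising from differentiation as bounded quantities depending linearly on $C_R$ with small coefficient $O(\epsilon)$ or on the data $M_2$.

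First I would prove \eqref{e1}. Let $w_i^{(l)}=\partial_t^2\phi_i^{(l)}$. Differentiating \eqref{c3} twice in $t$ produces an equation of the form
\[
\partial_x w_1^{(l)}+\nu_1(\phi^{(l-1)}+\Phi)\partial_t w_1^{(l)}=\frac{\beta}{2}\nu_1(\Phi)\,w_1^{(l)}+R_1^{(l)},
\]
where $R_1^{(l)}$ collects terms involving $\partial_t^2\phi^{(l-1)}$, $\partial_t\phi^{(l-1)}$, $\phi^{(l-1)}$, $\partial_t\phi^{(l)}$, $\phi^{(l)}$ and two temporal derivatives of $\beta$. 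By \eqref{e4} and the first-order bounds, $\|R_1^{(l)}\|_{L^\infty}\le C\epsilon C_R+C\epsilon$, uniformly in $l$. Differentiating \eqref{c4} twice yields the boundary value
\[
w_1^{(l)}(t,L)=\phi_{1_{br}}''(t)+\kappa_1\,w_2^{(l-1)}(t,L),\qquad\|w_1^{(l)}(\cdot,L)\|_{L^\infty}\le M_2+|\kappa_1|C_R.
\]
Multiplying through by $F_1$ as in \eqref{c46} and integrating along the first characteristic from $L$ to $x$ gives, exactly as in the derivation of \eqref{c47},
\[
|w_1^{(l)}(t,x)|\le \frac{M_2+|\kappa_1|C_R}{F_1(t,x)}+\frac{F_1(t,x)-1}{F_1(t,x)}\,C_R+C\epsilon\,C_R+C\epsilon.
\]
Since $|\kappa_1|<1$, one has $|\kappa_1|/F_1+(F_1-1)/F_1=1-(1-|\kappa_1|)/F_1<1$, so picking $C_R$ large enough that $C_R\ge \tfrac{M_0 M_2}{1-\kappa}+1$ (with the same $M_0$ as in \eqref{c28}) and then $\epsilon_4$ small enough, one closes $|w_1^{(l)}|\le C_R$. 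The argument for $w_2^{(l)}$ is symmetric.

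The bounds \eqref{e2} and \eqref{e3} are then algebraic consequences of \eqref{e1} via the PDE itself. Differentiating \eqref{c3} once in $t$ and solving for $\partial_x\partial_t\phi_1^{(l)}$ gives
\[
\partial_x\partial_t\phi_1^{(l)}=-\nu_1(\phi^{(l-1)}+\Phi)\,w_1^{(l)}+\text{(lower-order terms)},
\]
and using $|\nu_1|\le A_0$ from \eqref{b13} together with the just-proved bound $|w_1^{(l)}|\le C_R$ yields $\|\partial_x\partial_t\phi_1^{(l)}\|_{L^\infty}\le A_0 C_R+O(\epsilon)\le A_0 C_R$ after possibly enlarging $C_R$ and shrinking $\epsilon_4$. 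Differentiating \eqref{c3} in $x$ and solving for $\partial_x^2\phi_1^{(l)}$ produces $\partial_x^2\phi_1^{(l)}=-\nu_1\,\partial_x\partial_t\phi_1^{(l)}+(\text{l.o.t.})$, so a second application of $|\nu_1|\le A_0$ gives \eqref{e3}. The same two steps work for $i=2$.

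The main obstacle is balancing the constants so the induction closes: the boundary data contributes $M_2$, which is independent of $l$ and not small, so unlike in \proref{p1} one cannot rely on source-term smallness to dominate. The essential mechanism is instead the strict contraction $|\kappa_i|<1$ combined with the gain $F_i\ge 1>1-\tfrac{\beta_* A_0 L}{2}$ from the damping integrating factor; this is exactly the place where the ``nonstrictly diagonally dominant structure brought by damping'' (highlighted in the introduction) enters the second-order analysis, and it is why the scheme \eqref{c3}--\eqref{c6}, rather than a naive linearization, is what permits $C_R$ to be chosen uniformly in $l$.
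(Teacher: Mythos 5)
Your proposal follows the same route as the paper: differentiate \eqref{c3} twice in $t$, apply the integrating factor $F_1$ and integrate along the first characteristic, use $|\kappa_1|<1$ together with $1\le F_1\le M_0$ to close the estimate for $\partial_t^2\phi_1^{(l)}$, and then read off $\partial_x\partial_t$ and $\partial_x^2$ algebraically from the lower-order equations. However, there is a concrete slip in the final two steps. You close the first estimate only as $|w_1^{(l)}|\le C_R$ and then claim
\[
\|\partial_x\partial_t\phi_1^{(l)}\|_{L^\infty}\le A_0 C_R+O(\epsilon)\le A_0 C_R\quad\text{``after possibly enlarging $C_R$,''}
\]
which is not a valid inequality: the $O(\epsilon)$ term here is a fixed positive quantity (it comes from the lower-order pieces of \eqref{c39}, bounded via \eqref{c13}, \eqref{c19}, \eqref{c47}, independently of $C_R$), and enlarging $C_R$ only scales both occurrences of $C_R$ without cancelling it. The same issue then reappears at the $\partial_x^2$ step.

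The missing ingredient is to extract a \emph{strict} contraction from your own bound on $w_1^{(l)}$ and carry the slack forward. Your inequality
\[
|w_1^{(l)}(t,x)|\le \frac{M_2}{F_1}+\Bigl(1-\tfrac{1-|\kappa_1|}{F_1}+C\epsilon\Bigr)C_R+C\epsilon
\]
in fact gives, for $C_R$ large and $\epsilon$ small, $\|w_1^{(l)}\|_{L^\infty}\le\hbar_1 C_R$ with a fixed $\hbar_1\in\bigl(1-\tfrac{1-\kappa}{M_0},\,1\bigr)$ independent of $l$; this is exactly what the paper records. Then the next step closes as $\|\partial_x\partial_t\phi_i^{(l)}\|_{L^\infty}\le A_0\hbar_1 C_R+C\epsilon\le A_0\hbar_2 C_R$ for some $\hbar_2\in(\hbar_1,1)$ (now the $C\epsilon$ is absorbed into $A_0(\hbar_2-\hbar_1)C_R$ by choosing $C_R$ large), and finally $\|\partial_x^2\phi_i^{(l)}\|_{L^\infty}\le A_0^2\hbar_2 C_R+C\epsilon\le A_0^2 C_R$. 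With this bookkeeping the induction closes; without it, the last two inequalities do not.
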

\begin{proof}
\indent Since we use actually the same sequence constructed in~\secref{s3}. By~\proref{p1}, we already have~\eqref{c13}-\eqref{c17} for each $l$ and especially,
\begin{align}
\|\phi^{(l)}\|_{C^{1}}\leq(C_{1}+M_{1})\epsilon,\quad\|\phi^{(l-1)}\|_{C^{1}}\leq(C_{1}+M_{1})\epsilon.\label{e7}
\end{align}
Let
$$\varphi_{i}^{(l)}=\partial_{t}z_{i}^{(l)}=\partial_{t}^{2}\phi_{i}^{(l)},\quad i=1,2; l\in \mathbb{N}_{+}.$$
First, by the aid of boundary conditions~\eqref{c40} and~\eqref{c42} and using~\eqref{b18} and~\eqref{e4}, we have
\begin{align}
&|\varphi_{1}^{(l)}(t,L)|\leq M_{2}+|\kappa_{1}|C_{R},\label{e8}\\
&|\varphi_{2}^{(l)}(t,0)|\leq M_{2}+|\kappa_{2}|C_{R}.\label{e9}
\end{align}
Then we use a method similar to~\eqref{c47} to get
\begin{align}
\|\varphi_{1}^{(l)}(t,x)\|_{L^{\infty}}\leq \hbar_{1}C_{R},\label{e10}
\end{align}
where constant $0<\hbar_{1}<1$ is independent of $l$.\\
\indent Similarly, we have
\begin{align}
\|\varphi_{2}^{(l)}(t,x)\|_{L^{\infty}}\leq \hbar_{1}C_{R}.\label{e11}
\end{align}
Next, using~\eqref{c39} and~\eqref{c41} and by~\eqref{a2}-\eqref{a3}, \eqref{b13}, \eqref{c13} and~\eqref{e10}-\eqref{e11}, we get
\begin{align}
\|\partial_{x}\partial_{t}\phi_{i}^{(l)}|_{L^{\infty}}&\leq A_{0}\hbar_{1}C_{R}+C\epsilon+C\epsilon^{2}\notag\\
&\leq A_{0}\hbar_{2}C_{R},\label{e12}
\end{align}
where $\hbar_{2}\in(\hbar_{1},1)$ is a constant independent of $l$.\\
\indent Then taking the spatial derivative to~\eqref{c3}, we have
\begin{align*}
\partial_{x}^{2}\phi_{1}^{(l)}=&-\nu_{1}(\phi^{(l-1)}+\Phi)\partial_{x}\partial_{t}\phi_{1}^{(l)}-\sum_{j=1}^{2}\frac{\partial\nu_{1}}{\partial\phi_{j}}(\phi^{(l-1)}+\Phi)
\partial_{x}\phi_{j}^{(l-1)}z_{1}^{(l)}\\
&+\frac{\beta}{2}\nu_{1}(\Phi)\partial_{x}\phi_{1}^{(l)}+\frac{\beta}{2}\sum_{j=1}^{2}\frac{\partial\nu_{1}}{\partial\phi_{j}}
(\phi^{(l-1)}+\Phi)\partial_{x}\phi_{j}^{(l-1)}\phi_{1}^{(l-1)}\\
&+\frac{\partial_{x}\beta}{2}\nu_{1}(\Phi)\phi_{1}^{(l)}
+\frac{\beta}{2}(\nu_{1}(\phi^{(l-1)}+\Phi)-\nu_{1}(\Phi))\partial_{x}\phi_{1}^{(l-1)}\\
&+\frac{\partial_{x}\beta}{2}\Big(\nu_{1}(\phi^{(l-1)}+\Phi)-\nu_{1}(\Phi)\Big)\phi_{1}^{(l-1)}+\frac{\beta}{2}\nu_{1}(\phi^{(l-1)}+\Phi)\partial_{x}\phi_{2}^{(l-1)}\\
&+\frac{\beta}{2}\sum_{j=1}^{2}\frac{\partial\nu_{1}}{\partial\phi_{j}}(\phi^{(l-1)}+\Phi)
\partial_{x}\phi_{j}^{(l-1)}\phi_{2}^{(l-1)}+\frac{\partial_{x}\beta}{2}\nu_{1}(\phi^{(l-1)}+\Phi)\phi_{2}^{(l-1)}.
\end{align*}
Thus, using~\eqref{a2}-\eqref{a3}, \eqref{b13}, \eqref{c13}-\eqref{c14} \eqref{e7} and~\eqref{e12}, we get
\begin{align}
\|\partial_{x}^{2}\phi_{1}^{(l)}\|_{L^{\infty}}\leq& A_{0}^{2}\hbar_{2}C_{R}+C\epsilon\notag\\
\leq& A_{0}^{2}C_{R}.\label{e13}
\end{align}
In a similar way, we also get
\begin{align}
\|\partial_{x}^{2}\phi_{2}^{(l)}\|_{L^{\infty}}\leq A_{0}^{2}C_{R}.\label{e14}
\end{align}
\qedhere
\end{proof}
$\mathbf{The~Proof~of~Theorem~2.3.}$ By~\eqref{e1}-\eqref{e3}, we know that $\{\phi^{(l)}\}_{l=1}^{\infty}$ is uniformly $W^{2,\infty}$ bounded and then $weak^{*}$ convergent. Moreover, noting that $\{\phi^{(l)}\}_{l=1}^{\infty}$ converges strongly to $\phi^{(T_{*})}$ in $C^{1}$, so we get the $W^{2,\infty}$ regularity of $\phi^{(T_{*})}$.
\section{Boundary Stabilization around the Time-periodic Solution}\label{s6}
\indent\indent In this section, we will give the proof of~\theref{t5}.\\
\indent Noting that we have got the $C^{0}$ exponential convergence in~\theref{t2} as follows:
\begin{align}
\|\phi_{i}(t,\cdot)-\phi_{i}^{(T_{*})}(t,\cdot)\|_{C^{0}}\leq C_{S}\epsilon\xi^{N},\quad\forall t\in[NT_{0},(N+1)T_{0}),\forall l\in \mathbb{N}_{+},\label{f1}
\end{align}
which also shows that
\begin{align}
\|\phi_{i}(t,\cdot)-\phi_{i}^{(T_{*})}(t,\cdot)\|_{C^{0}}\leq C_{S}\epsilon\xi^{N+1},\quad\forall t\in[(N+1)T_{0},(N+2)T_{0}),\forall l\in \mathbb{N}_{+}.\label{f2}
\end{align}
Moreover, by~\theref{t1}, \theref{t4} and~\lemref{l1}, we have
\begin{align}
\|\phi\|_{C^{1}}\leq C_{3}\epsilon,~~\|\phi^{(T_{*})}\|_{C^{1}}\leq C_{E}\epsilon,~~\|\phi^{(T_{*})}\|_{W^{2,\infty}}\leq(1+A_{0})^{2}C_{R}.\label{f3}
\end{align}
By the continuity, we will inductively get the estimates for the convergence of the first derivatives, namely, for each $N\in \mathbb{N}_{+}$ and $\tau\in[(N+1)T_{0},(N+2)T_{0}]$, we will prove
\begin{align}
&\|\partial_{t}\phi_{i}(t,\cdot)-\partial_{t}\phi_{i}^{(T_{*})}(t,\cdot)\|_{C^{0}}\leq C_{S}^{*}\epsilon\xi^{N+1},\quad\forall t\in[(N+1)T_{0},\tau],\forall l\in \mathbb{N}_{+},\label{f4}\\
&\|\partial_{x}\phi_{i}(t,\cdot)-\partial_{x}\phi_{i}^{(T_{*})}(t,\cdot)\|_{C^{0}}\leq A_{0}C_{S}^{*}\epsilon\xi^{N+1},\quad\forall t\in[(N+1)T_{0},\tau],\forall l\in \mathbb{N}_{+}\label{f5}
\end{align}
under the assumption
\begin{align}
&\|\partial_{t}\phi_{i}(t,\cdot)-\partial_{t}\phi_{i}^{(T_{*})}(t,\cdot)\|_{C^{0}}\leq C_{S}^{*}\epsilon\xi^{N},\quad\forall t\in[NT_{0},\tau],\forall l\in \mathbb{N}_{+},\label{f6}\\
&\|\partial_{x}\phi_{i}(t,\cdot)-\partial_{x}\phi_{i}^{(T_{*})}(t,\cdot)\|_{C^{0}}\leq A_{0}C_{S}^{*}\epsilon\xi^{N},\quad\forall t\in[NT_{0},\tau],\forall l\in \mathbb{N}_{+}.\label{f7}
\end{align}
Let
$$z_{i}=\partial_{t}\phi_{i},\quad w_{i}=\partial_{x}\phi_{i}$$
and
$$z_{i}^{(T_{*})}=\partial_{t}\phi_{i}^{(T_{*})},\quad w_{i}^{(T_{*})}=\partial_{x}\phi_{i}^{(T_{*})}.$$
Taking the temporal derivative on boundary conditions~\eqref{b9}-\eqref{b10}, we get
\begin{align*}
&z_{2}(t,0)=\phi'_{2_{b}}(t)+\kappa_{2}z_{1}(t,0),\quad t>0,\\
&z_{1}(t,L)=\phi'_{1_{b}}(t)+\kappa_{1}z_{2}(t,L),\quad t>0
\end{align*}
and
\begin{align*}
&z_{2}^{(T_{*})}(t,0)=\phi'_{2_{b}}(t)+\kappa_{2}z_{1}^{(T_{*})}(t,0),\quad t>0,\\
&z_{1}^{(T_{*})}(t,L)=\phi'_{1_{b}}(t)+\kappa_{1}z_{2}^{(T_{*})}(t,L),\quad t>0.
\end{align*}
Thus, on the boundary $x=0$, we have
\begin{align}
\mathop{\sup}\limits_{t\in[NT_{0},\tau]}|z_{2}(t,0)-z_{2}^{(T_{*})}(t,0)|&\leq|\kappa_{2}||z_{1}(t,0)-z_{1}^{(T_{*})}(t,0)|\notag\\
&\leq|\kappa_{2}|C_{S}^{*}\epsilon\xi^{N}.\label{f8}
\end{align}
Similarly, on $x=L$, we have
\begin{align}
\mathop{\sup}\limits_{t\in[NT_{0},\tau]}|z_{1}(t,L)-z_{1}^{(T_{*})}(t,L)|\leq|\kappa_{1}|C_{S}^{*}\epsilon\xi^{N}.\label{f9}
\end{align}
In the domain, we take the temporal derivative of~\eqref{c1} and~\eqref{d8} to get
\begin{align}
&\partial_{x}z_{1}+\nu_{1}(\phi+\Phi)\partial_{t}z_{1}\notag\\
=&\frac{\beta}{2}\nu_{1}(\Phi)z_{1}+\frac{\partial_{t}\beta}{2}\nu_{1}(\Phi)\phi_{1}+\frac{\beta}{2}\nu_{1}(\Phi)z_{2}+\frac{\partial_{t}\beta}{2}\nu_{1}(\Phi)\phi_{2}\notag\\
&+\frac{\beta}{2}\Big(\nu_{1}(\phi+\Phi)-\nu_{1}(\Phi)\Big)(z_{1}+z_{2})\notag\\
&+\frac{\partial_{t}\beta}{2}\Big(\nu_{1}(\phi+\Phi)-\nu_{1}(\Phi)\Big)(\phi_{1}+\phi_{2})\notag\\
&+\frac{\beta}{2}\sum_{j=1}^{2}\frac{\partial\nu_{1}}{\partial\phi_{j}}(z_{j}\phi_{1}+z_{j}\phi_{2})
-\sum_{j=1}^{2}\frac{\partial\nu_{1}}{\partial\phi_{j}}z_{j}z_{1},\label{f10}
\end{align}
\begin{align}
&\partial_{x}z_{1}^{(T_{*})}+\nu_{1}(\phi^{(T_{*})}+\Phi)\partial_{t}z_{1}^{(T_{*})}\notag\\
=&\frac{\beta}{2}\nu_{1}(\Phi)z_{1}^{(T_{*})}+\frac{\partial_{t}\beta}{2}\nu_{1}(\Phi)\phi_{1}^{(T_{*})}+\frac{\beta}{2}\nu_{1}(\Phi)z_{2}^{(T_{*})}
+\frac{\partial_{t}\beta}{2}\nu_{1}(\Phi)\phi_{2}^{(T_{*})}\notag\\
&+\frac{\beta}{2}\Big(\nu_{1}(\phi^{(T_{*})}+\Phi)-\nu_{1}(\Phi)\Big)(z_{1}^{(T_{*})}+z_{2}^{(T_{*})})\notag\\
&+\frac{\partial_{t}\beta}{2}\Big(\nu_{1}(\phi^{(T_{*})}+\Phi)-\nu_{1}(\Phi)\Big)(\phi_{1}^{(T_{*})}+\phi_{2}^{(T_{*})})\notag\\
&+\frac{\beta}{2}\sum_{j=1}^{2}\frac{\partial\nu_{1}}{\partial\phi_{j}}(z_{j}^{(T_{*})}\phi_{1}^{(T_{*})}+z_{j}^{(T_{*})}\phi_{2}^{(T_{*})})
-\sum_{j=1}^{2}\frac{\partial\nu_{1}}{\partial\phi_{j}}z_{j}^{(T_{*})}z_{1}^{(T_{*})}.\label{f11}
\end{align}
Furthermore, by~\eqref{f10}-\eqref{f11}, we get
\begin{align}
&\partial_{x}(z_{1}-z_{1}^{(T_{*})})+\nu_{1}(\phi+\Phi)\partial_{t}(z_{1}-z_{1}^{(T_{*})})\notag\\
=&-\Big(\nu_{1}(\phi+\Phi)-\nu_{1}(\phi^{(T_{*})}+\Phi)\Big)\partial_{t}z_{1}^{(T_{*})}+\frac{\beta}{2}\nu_{1}(\Phi)(z_{1}-z_{1}^{(T_{*})})\notag\\
&+\frac{\partial_{t}\beta}{2}\nu_{1}(\Phi)(\phi_{1}-\phi_{1}^{(T_{*})})+\frac{\beta}{2}\nu_{1}(\Phi)(z_{2}-z_{2}^{(T_{*})})
+\frac{\partial_{t}\beta}{2}\nu_{1}(\Phi)(\phi_{2}-\phi_{2}^{(T_{*})})\notag\\
&+\frac{\beta}{2}\Big(\nu_{1}(\phi+\Phi)-\nu_{1}(\Phi)\Big)\Big((z_{1}+z_{2})-(z_{1}^{(T_{*})}+z_{2}^{(T_{*})})\Big)\notag\\
&+\frac{\beta}{2}\Big(\nu_{1}(\phi+\Phi)-\nu_{1}(\phi^{(T_{*})}+\Phi)\Big)(z_{1}^{(T_{*})}+z_{2}^{(T_{*})})\notag\\
&+\frac{\partial_{t}\beta}{2}\Big(\nu_{1}(\phi+\Phi)-\nu_{1}(\Phi)\Big)\Big((\phi_{1}+\phi_{2})-(\phi_{1}^{(T_{*})}+\phi_{2}^{(T_{*})})\Big)\notag\\
&+\frac{\partial_{t}\beta}{2}\Big(\nu_{1}(\phi+\Phi)-\nu_{1}(\phi^{(T_{*})}+\Phi)\Big)(\phi_{1}^{(T_{*})}+\phi_{2}^{(T_{*})})\notag\\
&+\frac{\beta}{2}\sum_{j=1}^{2}\frac{\partial\nu_{1}}{\partial\phi_{j}}(\phi+\Phi)z_{j}\Big((\phi_{1}+\phi_{2})-(\phi_{1}^{(T_{*})}+\phi_{2}^{(T_{*})})\Big)\notag\\
&+\frac{\beta}{2}\sum_{j=1}^{2}\frac{\partial\nu_{1}}{\partial\phi_{j}}(\phi+\Phi)(z_{j}-z_{j}^{(T_{*})})(\phi_{1}^{(T_{*})}+\phi_{2}^{(T_{*})})\notag\\
&+\frac{\beta}{2}\sum_{j=1}^{2}\Big(\frac{\partial\nu_{1}}{\partial\phi_{j}}(\phi+\Phi)-\frac{\partial\nu_{1}}{\partial\phi_{j}}(\phi^{(T_{*})}+\Phi)\Big)
(z_{j}^{(T_{*})})(\phi_{1}^{(T_{*})}+\phi_{2}^{(T_{*})})\notag\\
&-\sum_{j=1}^{2}\frac{\partial\nu_{1}}{\partial\phi_{j}}(\phi+\Phi)z_{j}(z_{1}-z_{1}^{(T_{*})})
-\sum_{j=1}^{2}\frac{\partial\nu_{1}}{\partial\phi_{j}}(\phi+\Phi)(z_{j}-z_{j}^{(T_{*})})z_{1}^{(T_{*})}\notag\\
&-\sum_{j=1}^{2}\Big(\frac{\partial\nu_{1}}{\partial\phi_{j}}(\phi+\Phi)-\frac{\partial\nu_{1}}{\partial\phi_{j}}(\phi^{(T_{*})}+\Phi)\Big)z_{j}^{(T_{*})}z_{1}^{(T_{*})}.\label{f12}
\end{align}
And we multiply $F_{1}(t,x)$ on both sides of~\eqref{f12} to gain
\begin{align*}
&\partial_{x}\Big(F_{1}(z_{1}-z_{1}^{(T_{*})})\Big)+\nu_{1}(\phi+\Phi)\partial_{t}\Big(F_{1}(z_{1}-z_{1}^{(T_{*})})\Big)\notag\\
=&-F_{1}\Big(\nu_{1}(\phi+\Phi)-\nu_{1}(\phi^{(T_{*})}+\Phi)\Big)\partial_{t}z_{1}^{(T_{*})}\notag\\
&+\nu_{1}(\phi+\Phi)F_{1}(z_{1}-z_{1}^{(T_{*})})\int_{x}^{L}\frac{\partial_{t}\beta(t,s)}{2}\nu_{1}(\Phi)ds\notag\\
&+\frac{\partial_{t}\beta}{2}F_{1}\nu_{1}(\Phi)(\phi_{1}-\phi_{1}^{(T_{*})})+\frac{\partial F_{1}}{\partial x}(z_{2}-z_{2}^{(T_{*})})\notag\\
&+\frac{\partial_{t}\beta}{2}F_{1}\nu_{1}(\Phi)(\phi_{2}-\phi_{2}^{(T_{*})})\notag\\
&+\frac{\beta}{2}F_{1}\Big(\nu_{1}(\phi+\Phi)-\nu_{1}(\Phi)\Big)\Big((z_{1}+z_{2})-(z_{1}^{(T_{*})}+z_{2}^{(T_{*})})\Big)\notag\\
&+\frac{\beta}{2}F_{1}\Big(\nu_{1}(\phi+\Phi)-\nu_{1}(\phi^{(T_{*})}+\Phi)\Big)(z_{1}^{(T_{*})}+z_{2}^{(T_{*})})\notag\\
&+\frac{\partial_{t}\beta}{2}F_{1}\Big(\nu_{1}(\phi+\Phi)-\nu_{1}(\Phi)\Big)\Big((\phi_{1}+\phi_{2})-(\phi_{1}^{(T_{*})}+\phi_{2}^{(T_{*})})\Big)\notag\\
&+\frac{\partial_{t}\beta}{2}F_{1}\Big(\nu_{1}(\phi+\Phi)-\nu_{1}(\phi^{(T_{*})}+\Phi)\Big)(\phi_{1}^{(T_{*})}+\phi_{2}^{(T_{*})})\notag\\
&+\frac{\beta}{2}F_{1}\sum_{j=1}^{2}\frac{\partial\nu_{1}}{\partial\phi_{j}}(\phi+\Phi)z_{j}\Big((\phi_{1}+\phi_{2})-(\phi_{1}^{(T_{*})}+\phi_{2}^{(T_{*})})\Big)\notag\\
&+\frac{\beta}{2}F_{1}\sum_{j=1}^{2}\frac{\partial\nu_{1}}{\partial\phi_{j}}(\phi+\Phi)(z_{j}-z_{j}^{(T_{*})})(\phi_{1}^{(T_{*})}+\phi_{2}^{(T_{*})})\notag\\
&+\frac{\beta}{2}F_{1}\sum_{j=1}^{2}\Big(\frac{\partial\nu_{1}}{\partial\phi_{j}}(\phi+\Phi)-\frac{\partial\nu_{1}}{\partial\phi_{j}}(\phi^{(T_{*})}+\Phi)\Big)
(z_{j}^{(T_{*})})(\phi_{1}^{(T_{*})}+\phi_{2}^{(T_{*})})\notag\\
&-F_{1}\sum_{j=1}^{2}\frac{\partial\nu_{1}}{\partial\phi_{j}}(\phi+\Phi)z_{j}(z_{1}-z_{1}^{(T_{*})})\\
&-F_{1}\sum_{j=1}^{2}\frac{\partial\nu_{1}}{\partial\phi_{j}}(\phi+\Phi)(z_{j}-z_{j}^{(T_{*})})z_{1}^{(T_{*})}\notag\\
&-F_{1}\sum_{j=1}^{2}\Big(\frac{\partial\nu_{1}}{\partial\phi_{j}}(\phi+\Phi)-\frac{\partial\nu_{1}}{\partial\phi_{j}}
(\phi^{(T_{*})}+\Phi)\Big)z_{j}^{(T_{*})}z_{1}^{(T_{*})}.
\end{align*}
Then we integrate it along the corresponding backward characteristic curve $t=t_{1}(x;\hat{t},\hat{x})$ defined by~\eqref{d10} and by~\eqref{a2}-\eqref{a3}, \eqref{b13}, \eqref{f1}-\eqref{f3}, \eqref{f6}-\eqref{f7} and~\eqref{f9}, we have
\begin{align*}
&|z_{1}(\hat{t},\hat{x})-z_{1}^{(T_{*})}(\hat{t},\hat{x})|\\
\leq&\frac{|\kappa_{1}|C_{S}^{*}\epsilon\xi^{N}}{F_{1}(\hat{t},\hat{x})}+C\epsilon C_{S}\epsilon\xi^{N}+C\epsilon C_{S}^{*}\epsilon\xi^{N}\\
&+\frac{F_{1}(\hat{t},\hat{x})-1}{F_{1}(\hat{t},\hat{x})}C_{S}^{*}\epsilon\xi^{N}+F_{1}(\hat{t},\hat{x})\mathop{\sup}\limits_{\phi\in\Psi}|\nabla\nu_{1}|
C_{R}C_{S}\epsilon\xi^{N}.
\end{align*}
Here we choose constant
$$C_{S}^{*}>30\mathop{\max}\limits_{1\leq i\leq2}\mathop{\sup}\limits_{\phi\in\Psi}|\nabla\nu_{i}|C_{R}C_{S}+30\beta_{*}A_{0}C_{S}+C_{S},$$
then we can get
\begin{align*}
|z_{1}(\hat{t},\hat{x})-z_{1}^{(T_{*})}(\hat{t},\hat{x})|\leq\hbar_{3}C_{S}^{*}\epsilon\xi^{N+1},\quad\forall(\hat{t},\hat{x})\in[(N+1)T_{0},\tau]\times[0,L],
\end{align*}
where constant $0<\hbar_{3}<1$.\\
\indent Similarly, we get
\begin{align*}
|z_{2}(\hat{t},\hat{x})-z_{2}^{(T_{*})}(\hat{t},\hat{x})|\leq\hbar_{3}C_{S}^{*}\epsilon\xi^{N+1},\quad\forall(\hat{t},\hat{x})\in[(N+1)T_{0},\tau]\times[0,L].
\end{align*}
Thus, by the arbitrariness of $\hat{t}\in[(N+1)T_{0},\tau]$, we have
\begin{align}
\|z_{i}(t,\cdot)-z_{i}^{(T_{*})}\|_{C^{0}}\leq\hbar_{3}C_{S}^{*}\epsilon\xi^{N+1},\quad\forall t\in[(N+1)T_{0},\tau].\label{f13}
\end{align}
Using~\eqref{c1} and~\eqref{d8}, we get
\begin{align*}
&w_{1}-w_{1}^{(T_{*})}\\
=&\Big(\nu_{1}(\phi+\Phi)-\nu_{1}(\phi^{(T_{*})}+\Phi)\Big)z_{1}-\nu_{1}(\phi^{(T_{*})}+\Phi)(z_{1}-z_{1}^{(T_{*})})\\
&+\frac{\beta}{2}\nu_{1}(\Phi)(\phi_{1}-\phi_{1}^{(T_{*})})+\frac{\beta}{2}\Big(\nu_{1}(\phi+\Phi)-\nu_{1}(\phi^{(T_{*})}+\Phi)\Big)(\phi_{1}+\phi_{2})\\
&+\frac{\beta}{2}\nu_{1}(\phi^{(T_{*})}+\Phi)\Big((\phi_{1}+\phi_{2})-(\phi_{1}^{(T_{*})}+\phi_{2}^{(T_{*})})\Big)\\
&+\frac{\beta}{2}\nu_{1}(\Phi)(\phi_{2}-\phi_{2}^{(T_{*})}),
\end{align*}
then by~\eqref{a2}, \eqref{b13}, \eqref{f1}-\eqref{f3} and~\eqref{f13}, we have
\begin{align}
&\|w_{1}(t,\cdot)-w_{1}^{(T_{*})}(t,\cdot)\|\notag\\
\leq&A_{0}\hbar_{3}C_{S}^{*}\epsilon\xi^{N+1}+C\epsilon C_{S}\epsilon\xi^{N}+2\beta_{*}A_{0}C_{S}\epsilon\xi^{N}\notag\\
\leq&A_{0}C_{S}^{*}\epsilon\xi^{N+1}.\label{f14}
\end{align}
This indicates that we complete the proof of~\theref{t5}.

\end{sloppypar}
\end{document}